\documentclass[leqno]{amsart}
\usepackage{amsmath}
\usepackage{amssymb}
\usepackage{amsthm}
\usepackage{enumerate} 
\usepackage[mathscr]{eucal}
\theoremstyle{plain}
\usepackage{tikz}
\newtheorem{theorem}{Theorem}[section]
\newtheorem{lemma}[theorem]{Lemma}
\newtheorem{prop}[theorem]{Proposition}
\theoremstyle{definition}
\newtheorem{definition}[theorem]{Definition}
\newtheorem{remark}[theorem]{Remark}
\newtheorem{conjecture}[theorem]{Conjecture}
\newtheorem{example}[theorem]{Example}

\theoremstyle{remark}  
\usepackage[latin1]{inputenc}
\usepackage{tikz}
\usetikzlibrary{shapes,arrows}




\begin{document}
	
	\title[On subspaces of $\ell_\infty$ and extreme contraction in $\mathbb{L}(\mathbb{X}, \ell_{\infty}^n) $]{ On subspaces of $\ell_\infty$ and  extreme contraction in $\mathbb{L}(\mathbb{X}, \ell_{\infty}^n)$}
\author[Sohel, Sain and  Paul  ]{Shamim Sohel, Debmalya Sain  and Kallol Paul }

	\newcommand{\acr}{\newline\indent}
		\address[Sohel]{Department of Mathematics\\ Jadavpur University\\ Kolkata 700032\\ West Bengal\\ INDIA}
	\email{shamimsohel11@gmail.com}
	\address[Sain]{Departmento de Analisis Matematico\\ Universidad de Granada\\ SPAIN }
	\email{saindebmalya@gmail.com}
	\address[Paul]{Department of Mathematics\\ Jadavpur University\\ Kolkata 700032\\ West Bengal\\ INDIA}
	\email{kalloldada@gmail.com, kallol.paul@jadavpuruniversity.in}
	
	\thanks{ The first author would like to thank  CSIR, Govt. of India, for the financial support in the form of Junior Research Fellowship under the mentorship of Prof. Kallol Paul.  The research of Dr. Debmalya Sain is sponsored by a Maria Zambrano Grant at the University of Granada.
	} 
	
	\subjclass[2010]{Primary 46B20, Secondary 47L05}
	\keywords{Extreme contractions; polyhedral Banach spaces, isometry, subspace}

	\begin{abstract}
		
		We investigate different possiblities of subspaces of the space $\ell_{\infty}$ in terms of whether the subspaces are polyhedral or not. We further study finite-dimensional subspaces of $\ell_{\infty}$ which are of the form $\ell_\infty^n$ form some $ n \geq 2.$ As an application of the results we compute
		 the number of extreme contractions for a class of  the space of bounded linear operators. In particular we find the number of extreme contractions of   $\mathbb{L}(\mathbb{X}, \ell_{\infty}^n),$ where $\mathbb{X}$ is a  finite-dimensional polyhedral space.
	\end{abstract}
	
	\maketitle

	\section{Introduction}

	We use the symbols $ \mathbb{X}, \mathbb{Y}$ to denote Banach spaces. All the spaces considered here are over the real field $\mathbb{R}.$ Let $ B_{\mathbb{X}}= \{ x \in \mathbb{X}: \|x\| \leq 1\} $  and $ S_{\mathbb{X}}= \{ x \in \mathbb{X}: \|x\| = 1\} $ denote  the unit ball and the unit sphere of $\mathbb{X},$ respectively. Let $ \mathbb{L}(\mathbb{X}, \mathbb{Y})$ denote the Banach space of all bounded linear operators from $\mathbb{X}$ to $\mathbb{Y},$ endowed with the usual operator norm. For any set $S \subset \mathbb{X},$  $\mid S \mid $ denote the cardinality of $S.$ 
	 Let $A$ be a  non-empty convex subset of $  \mathbb{X},$ an element $ z \in A$ is said to be an extreme point of $A$, whenever $z = (1-t)x + ty,$ for some $t \in (0, 1)$ and $ x, y \in A$ then $ x = y = z.$ 
	The set of all extreme points of any convex set $A$ is denoted by $Ext(A).$  An operator $T \in \mathbb{L}(\mathbb{X}, \mathbb{Y})$ is said to be an extreme contraction if $T$ is an extreme point of $ B_{\mathbb{L}(\mathbb{X}, \mathbb{Y})}.$  It is trivial to observe that if $T$ is an extreme contraction then $\|T\|=1.$ A finite-dimensional Banach space $\mathbb{X}$ is said to be polyhedral if $Ext(B_{\mathbb{X}})$ is finite. 
	Note that finite-dimensional polyhedral spaces are always considered over the real field.  An infinite dimensional  Banach space is said to be polyhedral if its every finite dimensional subspace is polyhedral. For a detailed exposition on polyhedrons and their properties, we refer to \cite{A, SPBB}. Let us now recall the definitions of face and facets of a finite dimensional Banach space.


	\begin{definition}
		Let $\mathbb{X}$ be a  finite-dimensional  Banach space.	A convex set $Q$ is said to be a face of  $B_{\mathbb{X}}$ if either $Q=B_{\mathbb{X}}$ or if we can write $Q= B_{\mathbb{X}} \cap \delta M,$ where $ M$ is a closed half-space in $\mathbb{X}$ containing $B_{\mathbb{X}}$ and $\delta M$ denotes the boundary of $M.$  
		A maximal face $F$ is said to be a facet of $B_{\mathbb{X}}$ i.e.,  a face $F$ is said to be a facet if for any face $Q$ of $B_{\mathbb{X}}$ such that $ F \subseteq Q$ then $F=Q.$
	\end{definition}

 From the above definition a face of $B_{\mathbb{X}}$ can be written as intersection of some facets of  $B_{\mathbb{X}}.$ It should also be noted that if $v \in Ext(B_{\mathbb{X}})$ then $\{v\}$ is also a facet of $B_{\mathbb{X}}.$ Clearly, $\mathbb{X}$ is polyhedral if and only if the number of facets of $B_{\mathbb{X}}$ is finite.\\
	
	Clearly, $\ell_{\infty}^n$ is a polyhedral Banach space. Note that if $\mathbb{X}$ is a subspace of $\ell_{\infty}^n$ with $dim(\mathbb{X})= m ( 1 < m < n)$ then it is not necessarily of the form $\ell_{\infty}^m.$ For a concrete example of such a subspace see remarks of Theorem \ref{star} later on. We  find conditions for a subspace $\mathbb{X}$ to be of the form $\ell_{\infty}^m $ for some $ m, 1 <m<n.$ Moving from $\ell_{\infty}^n$ to the space $\ell_{\infty},$ we observe that the space $\ell_{\infty}$ is not polyhedral but it has   infinite dimensional subspaces like $c_0,$ which are polyhedral and finite-dimensional subspaces (see Example \ref{cos}) which are not polyhedral.	In this article  we aim to study finite dimensional subspaces of $\ell_{\infty}$ that can be embedded into $\ell_{\infty}^r,$ for some $r \in \mathbb{N}.$ We also characterize $m$-dimensional subspaces $\mathbb{X}$ of $\ell_{\infty}$ such that $\mathbb{X}$ is of the form $\ell_\infty^m$ for some $ m, 1 <m<n.$ As an application of the results obtained during the study of finite dimensional subspaces of $\ell_{\infty}^n (\ell_{\infty}),$ we explore the extreme contraction of the space $\mathbb{L}(\mathbb{X}, \ell_\infty^n),$ where $\mathbb{X}$ is a finite-dimensional polyhedral Banach space. Moreover, we can also determine the number of facets  $B_{\mathbb{L}(\mathbb{X}, \ell_{\infty}^n)}$ which provide us better understanding of the geometry of the space. To do so we need the notion of $*$-Property which was introduced (see \cite{SSGP}) in the  space of diagonal matrices to study best coaproximation. Here we first define $*$-Property and weak $*$-Property in  $\ell_{\infty}$  space.

	\begin{definition}
		Let $\mathcal{A}= \{ \widetilde{a_1}, \widetilde{a_2}, \ldots, \widetilde{a_m}\}$ be a set of linearly independent elements in $\ell_\infty,$ where $\widetilde{a_k}= (a_1^k, a_2^k, \ldots, a_n^k, \ldots),$ for each $1 \leq k \leq m.$ 
		\begin{itemize}
			\item[(i)] We define
			\[
			\mathcal{S}_\mathcal{A}= \{ (a_i^1, a_i^2, \ldots, a_i^m) \in \mathbb{R}^m: i \in \mathbb{N}\}.
			\]
			Whenever the context is clear we use $\mathcal{S}$ instead of $\mathcal{S}_\mathcal{A}.$
			Any element of $\overline{\mathcal{S}}$ is said to be a component of $\mathcal{A},$ where $\overline{\mathcal{S}}$ stands for the closure of $ \mathcal{S}$ with respect to the usual Euclidean norm in $\mathbb{R}^m.$ In particular, $(a_i^1, a_i^2, \ldots, a_i^m) \in \mathcal{S}$ is said to be the $i$-th component of $\mathcal{A}.$
			\item[(ii)]  Any two components $\textbf{a}, \textbf{b} \in \overline{\mathcal{S}}$ is said to be equivalent if $\textbf{a}= \pm \textbf{b}.$ For any $\textbf{a} \in \overline{\mathcal{S}},$ the equivalent set of $\textbf{a},$ denoted by $E_{\textbf{a}},$ is defined as 
			\[
			E_{\textbf{a}}=\{ \textbf{b} \in \overline{\mathcal{S}}: \textbf{b}= \pm \textbf{a}\}.
			\]  
			\item[(iii)] An element $\textbf{a} \in \overline{\mathcal{S}}$ is said to satisfy the $*$-Property if there exists $\widetilde{\beta}=(\beta_1, \beta_2, \ldots, \beta_m)\in  \mathbb{R}^m$ such that 
			\[
			| \sum_{k=1}^{m} \beta_k \widehat{a}_k| >   |\sum_{k=1}^{m} \beta_k \widehat{b}_k|, \quad \text{for all}~ \textbf{b} \in \overline{\mathcal{S}} \setminus E_{\textbf{a}},
			\] 
			where $\textbf{a}=(\widehat{a}_1, \widehat{a}_2, \ldots, \widehat{a}_m)$ and $\textbf{b}= (\widehat{b}_1, \widehat{b}_2, \ldots, \widehat{b}_m).$
				\item[(iv)] An element $\textbf{a} \in \overline{\mathcal{S}}$ is said to satisfy the weak $*$-Property if there exists $\widetilde{\beta}=(\beta_1, \beta_2, \ldots, \beta_m)\in \mathbb{R}^m$ such that 
			\[
			|\sum_{k=1}^{m} \beta_k \widehat{a}_k  	| \geq \sup   \bigg\{|\sum_{k=1}^{m} \beta_k \widehat{b}_k|: \textbf{b} \in \overline{\mathcal{S}}\setminus E_{\textbf{a}} \bigg\},
			\]
				where $\textbf{a}=(\widehat{a}_1, \widehat{a}_2, \ldots, \widehat{a}_m)$ and $\textbf{b}= (\widehat{b}_1,\widehat{b}_2, \ldots, \widehat{b}_m).$ 
		\end{itemize}
	\end{definition}

Clearly, if a component satisfies the $*$-Property then it  satisfies the weak $*$-Property but the converse is not necessarily true. The following proposition provides a class of such examples. 

\begin{prop}\label{compare}
	Let $\widetilde{a_1}= ( a+t_1, a+ t_2,\ldots, a+ t_n, \ldots), \widetilde{a_2}= (b-t_1, b- t_2,  \ldots, b-t_n, \ldots) \in \ell_{\infty},$ where $ a > 0, b > 0 $ and $\{t_n\}$ is a strictly decreasing sequence of real numbers in $(0,1)$ converging to $0.$  Let $\mathcal{A}=\{ \widetilde{a_1}, \widetilde{a_2}\}.$ Then each component of $\mathcal{A}$ satisfies the weak $*$-Property whereas   components satisfying the $*$-Property are $(a+t_1, b-t_1), (a, b).$
\end{prop}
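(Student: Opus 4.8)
The plan is to exploit the very special geometry of the components of $\mathcal{A}$. Since $m=2$, the $i$-th component is the pair $(a+t_i,\, b-t_i)\in\mathbb{R}^2$, and because $t_i\to 0$ the closure is $\overline{\mathcal{S}}=\{(a+t_i,b-t_i):i\in\mathbb{N}\}\cup\{(a,b)\}$, the extra point $(a,b)$ being the limit of the sequence. The first fact I would record is that every component satisfies $(a+t_i)+(b-t_i)=a+b$, so all components, including the limit point $(a,b)$, lie on the single line $x+y=a+b$; moreover each has strictly positive first coordinate, so no two distinct components can be negatives of one another, and hence $E_{\mathbf{a}}=\{\mathbf{a}\}$ for every component. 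Thus in both properties the set to beat is simply $\overline{\mathcal{S}}\setminus\{\mathbf{a}\}$.

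For the weak $*$-Property I would just test the functional $\widetilde{\beta}=(1,1)$: along the line $x+y=a+b$ this functional is constant, equal to $a+b$ on every component, so the defining inequality holds with equality for each $\mathbf{a}\in\overline{\mathcal{S}}$. This settles the weak $*$-Property for all components simultaneously.

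The substantive part is the $*$-Property. Parametrizing a component by its shift $t$ (so $t\in\{0\}\cup\{t_i\}\subset[0,t_1]$), the value of an arbitrary $\widetilde{\beta}=(\beta_1,\beta_2)$ is $f(t):=\beta_1(a+t)+\beta_2(b-t)=(\beta_1 a+\beta_2 b)+t(\beta_1-\beta_2)$, which is affine in $t$; consequently $|f|$ is convex in $t$. The key step is then to rule out every interior component $t_i$ with $i\ge 2$: since $0<t_i<t_1$ while both $t=0$ and $t=t_1$ correspond to genuine distinct components, convexity gives $|f(t_i)|\le\max\{|f(0)|,|f(t_1)|\}$, so $|f(t_i)|$ can never strictly exceed the value at both endpoints, which is exactly what the $*$-Property demands. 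Hence, no matter how $\widetilde{\beta}$ is chosen, every interior component fails the $*$-Property. For the two extreme components I would exhibit explicit functionals: $\widetilde{\beta}=(1,0)$ gives $f(t)=a+t$, positive and strictly increasing, uniquely maximised in modulus at $t=t_1$, so $(a+t_1,b-t_1)$ has the $*$-Property; and $\widetilde{\beta}=(N,N+1)$ with $N$ large enough that $N(a+b)+b\ge t_1$ gives $f(t)=\bigl(N(a+b)+b\bigr)-t$, positive and strictly decreasing on $[0,t_1]$, uniquely maximised in modulus at $t=0$, so $(a,b)$ has the $*$-Property. Assembling these facts gives the statement.

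I anticipate that the convexity argument excluding the interior components is the only genuine obstacle; everything else reduces to identifying $\overline{\mathcal{S}}$ correctly (in particular remembering the limit point $(a,b)$ and checking that the equivalence classes are singletons) and to writing down the three explicit choices of $\widetilde{\beta}$.
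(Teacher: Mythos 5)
Your proof is correct and follows essentially the same route as the paper's: the choice $\widetilde{\beta}=(1,1)$ for the weak $*$-Property, explicit functionals for the two components $(a+t_1,b-t_1)$ and $(a,b)$, and the observation that the value of any $\widetilde{\beta}$ is affine in the shift $t$, so no interior $t_i$ can strictly dominate both endpoints. Your convexity phrasing of that last step is a slightly cleaner packaging of the paper's sign-based case analysis (which derives the contradictory conditions $\beta_2>\beta_1$ and $\beta_1>\beta_2$), and your witnesses $(1,0)$ and $(N,N+1)$ work just as well as the paper's $(b,-a)$ and $(b-t_1,-a-t_1)$.
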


\begin{proof}
	Here $\mathcal{S}_{\mathcal{A}}= \{ (a+t_i, b- t_i), (a,b): i \in \mathbb{N}\}.$ By choosing $\beta_1=1, \beta_2=1,$ it is immediate that each component of $\mathcal{A}$ satisfies the weak $*$-Property. We next show that the components $(a+t_1, b-t_1), (a,b)$ satisfies the $*$-Property. Take $\beta_1= b, \beta_2= -a,$ then $|\beta_1 (a+t_1) + \beta_2 (b-t_1)|= | t_1(a+b) | > | t_i (a+b)| = |\beta_1 (a+t_i) + \beta_2 (b-t_i)|, $ for any $i > 1 $ and $ |\beta_1 a + \beta_2 b| =0.$ So, the components $ ( a+t_1, b-t_1 )$ satisfies the $*$-Property. Similarly, choosing $\beta_1= b- t_1, \, \beta_2= -a -t_1,$ we see that the component $(a, b)$ satisfies the $*$-Property. Next we claim that for any $ i > 1, $ the component $(a+ t_i, b-t_i)$ does not satisfy the $*$-Property. Suppose on the contrary that for some $ i> 1,$ the component $ (a+ t_i, b-t_i)$  satisfies the $*$-Property. Then there exists $\beta_1, \beta_2 \in \mathbb{R}$ such that $ |\beta_1 (a+t_i) + \beta_2 (b-t_i)| > |\beta_1 (a+t_1) + \beta_2 (b-t_1)|$ and $ |\beta_1 (a+t_i) + \beta_2 (b-t_i)|> |\beta_1 a + \beta_2 b|.$ We first assume that  $\beta_1 (a+t_i) + \beta_2 (b-t_i)> 0.$ Then 
	 $|\beta_1 (a+t_i) + \beta_2 (b-t_i)| > |\beta_1 (a+t_1) + \beta_2 (b-t_1)|$ only if $ \beta_2 > \beta_1$ and $ |\beta_1 (a+t_i) + \beta_2 (b-t_i)|> |\beta_1 a + \beta_2 b|$ only if $\beta_1 > \beta_2,$ which leads to a contradiction.  Similarly, if we consider $\beta_1 (a+t_i) + \beta_2 (b-t_i) < 0,$ then we get a  contradiction. Thus the component   $(a+t_i, b-t_i)$ does not satisfy the $*$-Property. This completes the proof.
\end{proof}

 Next, we define the $*$-Property in $\ell_{\infty}^n$  space.

\begin{definition}
  	Let $\mathcal{A}= \{ \widetilde{a_1}, \widetilde{a_2}, \ldots, \widetilde{a_m}\}$ be a set of linearly independent elements in $\ell_\infty^n,$ where $\widetilde{a_k}= (a_1^k, a_2^k, \ldots, a_n^k),$ for each $1 \leq k \leq m.$ Then the  $i$-th component $(a_i^1, a_i^2, \ldots, a_i^m) $ of $\mathcal{A}$  is said to satisfy the $*$-Property if there exists $\beta_1, \beta_2, \ldots, \beta_m \in \mathbb{R}^m$ such that 
  	\[
  	| \sum_{k=1}^{m} \beta_k a_i^k | > \max \bigg\{	| \sum_{k=1}^{m} \beta_k a_j^k |: j \in \{1, 2, \ldots, n\} \setminus E_{i}\bigg\},
  	\]
  where $E_{i} = \{ j \in \{1, 2, \ldots, n\}: (a_j^1, a_j^2, \ldots, a_j^m)= \pm (a_i^1, a_i^2, \ldots, a_i^m) \}.$ 
\end{definition}

Clearly the  $*$-property and the weak $*$-property coincide in $\ell_\infty^n$ space. Before we move into the next section we announce that henceforth whenever we say $\widetilde{a_k}$ is an element of $\ell_{\infty}$ ( or $ \ell_{\infty}^n$) we mean $\widetilde{a_k}= (a_1^k, a_2^k, \ldots, a_n^k, \ldots)$ ( or $\widetilde{a_k}= (a_1^k, a_2^k, \ldots, a_n^k)$).

	\section{Main Results}

	We start this section  with some simple propositions.

	\begin{prop}\label{prop}
		Let $\mathbb{X}$ be a polyhedral  Banach space and let $\mathbb{Y}$ be a proper subspace of $\mathbb{X}.$ Suppose that $F$ is a facet of $B_{\mathbb{X}}$ such that $int (F) \cap \mathbb{Y} \neq \emptyset,$ where $int(F)$ is the interior of $F$ with respect to subspace topology of $F.$ Then $F \cap \mathbb{Y}$ is a facet of $B_{\mathbb{Y}}.$
	\end{prop}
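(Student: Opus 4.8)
The plan is to realize the facet $F$ via a supporting functional, restrict that functional to $\mathbb{Y},$ and then use the interior hypothesis to pin down the dimension of $F \cap \mathbb{Y},$ from which the facet property follows. Throughout I read $int(F)$ as the relative interior of $F,$ i.e.\ its interior within the supporting hyperplane $\delta M = \mathrm{aff}(F),$ which is the sense that makes the hypothesis nontrivial.

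First I would write $F = \{ x \in B_{\mathbb{X}} : f(x) = 1\}$ for a suitable $f \in S_{\mathbb{X}^*}$ with $B_{\mathbb{X}} \subseteq \{ x : f(x) \leq 1\};$ here $\delta M = \{ x : f(x) = 1\}$ is the supporting hyperplane carrying $F,$ and since $F$ is a facet of the polytope $B_{\mathbb{X}}$ one has $\mathrm{aff}(F) = \delta M.$ Pick $y_0 \in int(F) \cap \mathbb{Y},$ which exists by hypothesis, and set $g = f|_{\mathbb{Y}}.$ Because $f(y_0) = 1$ together with $\|y_0\| \leq 1$ forces $\|y_0\| = 1,$ the restriction satisfies $\|g\| = 1,$ and hence $B_{\mathbb{Y}} \subseteq \{ y \in \mathbb{Y} : g(y) \leq 1\}.$ Taking $M' = \{ y \in \mathbb{Y} : g(y) \leq 1\},$ a closed half-space in $\mathbb{Y}$ containing $B_{\mathbb{Y}},$ I obtain $F \cap \mathbb{Y} = B_{\mathbb{Y}} \cap \delta M',$ so $F \cap \mathbb{Y}$ is a face of $B_{\mathbb{Y}};$ it is proper since $g(0) = 0 \neq 1$ shows it misses the origin.

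The crux is to upgrade this face to a facet, and the interior hypothesis is exactly what makes this possible. Since $y_0 \in int(F),$ there is $\varepsilon > 0$ with $\{ x \in \delta M : \|x - y_0\| < \varepsilon\} \subseteq F.$ Intersecting with $\mathbb{Y},$ the set $\{ y \in \delta M \cap \mathbb{Y} : \|y - y_0\| < \varepsilon\}$ is a relatively open ball, centered at $y_0,$ in the affine set $P := \delta M \cap \mathbb{Y} = \{ y \in \mathbb{Y} : g(y) = 1\},$ and it is contained in $F \cap \mathbb{Y}.$ As $g \neq 0$ (indeed $g(y_0) = 1$), the set $P$ is an affine hyperplane of $\mathbb{Y},$ of dimension $\dim \mathbb{Y} - 1;$ thus $F \cap \mathbb{Y}$ contains a full-dimensional ball of $P,$ whence $\dim(F \cap \mathbb{Y}) = \dim \mathbb{Y} - 1$ and $\mathrm{aff}(F \cap \mathbb{Y}) = P.$

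Finally I would deduce maximality. If $Q$ is a proper face of $B_{\mathbb{Y}}$ with $F \cap \mathbb{Y} \subseteq Q,$ then $Q \subseteq \{ y : h(y) = 1\}$ for some norming $h$ with $\|h\| = 1.$ Since $\mathrm{aff}(F \cap \mathbb{Y}) = P$ is an affine hyperplane of $\mathbb{Y}$ contained in the affine hyperplane $\{ h = 1\},$ the two coincide; comparing the linear parts forces $h = g,$ and therefore $Q \subseteq B_{\mathbb{Y}} \cap \{ g = 1\} = F \cap \mathbb{Y}.$ Hence $Q = F \cap \mathbb{Y},$ so $F \cap \mathbb{Y}$ is a facet of $B_{\mathbb{Y}}.$ I expect the main obstacle to be the dimension count of the third paragraph: one must verify that the relatively open piece of $F$ around $y_0$ meets $\mathbb{Y}$ in a set of full dimension $\dim \mathbb{Y} - 1$ inside $P,$ which is precisely where the hypothesis $int(F) \cap \mathbb{Y} \neq \emptyset,$ rather than merely $F \cap \mathbb{Y} \neq \emptyset,$ becomes indispensable.
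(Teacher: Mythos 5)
Your proof is correct, but it takes a genuinely different route from the paper's. The paper argues by contradiction in three lines: if $F\cap\mathbb{Y}$ were not a facet of $B_{\mathbb{Y}}$, there would be a face $F_1$ of $B_{\mathbb{X}}$ with $F\cap\mathbb{Y}\subsetneq F_1\cap\mathbb{Y}$; the point $v\in int(F)\cap\mathbb{Y}$ then lies in $F_1$, so $F_1$ meets the relative interior of the facet $F$, contradicting the maximality of $F$. That argument is short but leans on two facts it does not justify: that $F\cap\mathbb{Y}$ is a face of $B_{\mathbb{Y}}$ at all, and that every face of $B_{\mathbb{Y}}$ strictly containing it arises as $F_1\cap\mathbb{Y}$ for a face $F_1$ of $B_{\mathbb{X}}$. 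You instead work directly with the supporting functional: you exhibit $F\cap\mathbb{Y}=B_{\mathbb{Y}}\cap\{g=1\}$ with $g=f|_{\mathbb{Y}}$ of norm one (so it is a face by the paper's own definition), use the interior point $y_0$ to show this face contains a relatively open ball of the hyperplane $P=\{y\in\mathbb{Y}:g(y)=1\}$ and hence has affine hull $P$, and then get maximality from the uniqueness of the supporting hyperplane through a codimension-one face. Your version is longer but self-contained and makes explicit exactly where the hypothesis $int(F)\cap\mathbb{Y}\neq\emptyset$ (as opposed to $F\cap\mathbb{Y}\neq\emptyset$) enters, namely in the dimension count; the only ingredients you take on faith are standard polytope facts (a facet is full-dimensional in its supporting hyperplane), which the paper also assumes implicitly throughout.
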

	
	\begin{proof}
		Let $ v \in  int(F) \cap \mathbb{Y}.$ Suppose on the contrary that $F \cap \mathbb{Y}$ is not a facet of $B_{\mathbb{Y}}.$ Then there exists a face $F_1$ of $B_{\mathbb{X}}$ such that $F \cap \mathbb{Y} \subsetneq F_1 \cap \mathbb{Y}.$ So, $ v \in F_1 \cap \mathbb{Y}$ and $ F_1 \cap int(F) \neq \emptyset.$ This contradicts the fact that $F$ is a facet of $B_{\mathbb{X}}.$ \\
	\end{proof}

	\begin{prop}\label{weak}
			Let $\mathbb{W}$ be an $m$-dimensional subspace of $\ell_{\infty}$ and let  $\mathcal{A}= \{ \widetilde{a_1}, \widetilde{a_2}, $ \\$ \ldots, \widetilde{a_m}\}$ be a basis of $\mathbb{W}.$  Let $\mathbb{P}$ be the set of all nonequivalent components satisfying the weak $*$-Property.
			\begin{itemize}
				\item[(i)] $ \mid  \mathbb{P} \mid \, \geq m.$
				\item[(ii)] for any $\widetilde{\beta}=(\beta_1, \beta_2, \ldots, \beta_m) \in \mathbb{R}^m,$ 
				$\| \sum_{k=1}^{m} \beta_k \widetilde{a_k} \|= | \sum_{k=1}^{m} \beta_k \widehat{a}_k|,$ for some $      (\widehat{a}_1, \widehat{a}_2, \ldots, \widehat{a}_m) \in \mathbb{P}.$
			\end{itemize} 
	\end{prop}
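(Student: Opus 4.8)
The plan is to prove part (ii) first and then deduce part (i) from it, since the norm-attainment statement in (ii) is exactly what drives the counting bound in (i). The starting observation is that, because each $\widetilde{a_k}$ lies in $\ell_\infty$, every coordinate satisfies $|a_i^k|\le\|\widetilde{a_k}\|$, so $\mathcal{S}$ is a bounded subset of $\mathbb{R}^m$ and hence $\overline{\mathcal{S}}$ is compact. Writing a general element of $\mathbb{W}$ as $\sum_{k=1}^m\beta_k\widetilde{a_k}$, its $\ell_\infty$-norm unwinds as
\[
\Big\|\sum_{k=1}^m\beta_k\widetilde{a_k}\Big\|=\sup_{i\in\mathbb{N}}\Big|\sum_{k=1}^m\beta_k a_i^k\Big|=\sup_{\mathbf{a}\in\mathcal{S}}\Big|\sum_{k=1}^m\beta_k\widehat{a}_k\Big|.
\]
Since $\mathbf{a}\mapsto\big|\sum_k\beta_k\widehat{a}_k\big|$ is continuous and $\mathcal{S}$ is dense in $\overline{\mathcal{S}}$, this supremum equals the supremum over $\overline{\mathcal{S}}$, which is now a maximum attained at some point of the compact set $\overline{\mathcal{S}}$.

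For (ii), I would fix $\widetilde{\beta}$ and let $\mathbf{a}^{\ast}\in\overline{\mathcal{S}}$ attain the maximum above. Then $\mathbf{a}^{\ast}$ satisfies the weak $*$-Property witnessed by this very $\widetilde{\beta}$: since $\overline{\mathcal{S}}\setminus E_{\mathbf{a}^{\ast}}\subseteq\overline{\mathcal{S}}$,
\[
\sup\Big\{\big|\sum_k\beta_k\widehat{b}_k\big|:\mathbf{b}\in\overline{\mathcal{S}}\setminus E_{\mathbf{a}^{\ast}}\Big\}\le\max_{\mathbf{a}\in\overline{\mathcal{S}}}\Big|\sum_k\beta_k\widehat{a}_k\Big|=\Big|\sum_k\beta_k\widehat{a}^{\ast}_k\Big|,
\]
which is exactly the defining inequality. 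Replacing $\mathbf{a}^{\ast}$ by the representative of its class $E_{\mathbf{a}^{\ast}}$ lying in $\mathbb{P}$ (it differs from $\mathbf{a}^{\ast}$ only by a sign, so gives the same value of $\big|\sum_k\beta_k\widehat{a}_k\big|$) produces an element of $\mathbb{P}$ realising the norm, proving (ii).

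For (i), I would argue by contradiction using (ii). Suppose the components satisfying the weak $*$-Property span only a proper subspace $V\subsetneq\mathbb{R}^m$. Then $V^{\perp}\neq\{0\}$, so I may pick $\widetilde{\beta}\in V^{\perp}\setminus\{0\}$. By (ii) there is a component $(\widehat{a}_1,\ldots,\widehat{a}_m)\in\mathbb{P}\subseteq V$ with $\big\|\sum_k\beta_k\widetilde{a_k}\big\|=\big|\sum_k\beta_k\widehat{a}_k\big|=0$, whence $\sum_k\beta_k\widetilde{a_k}=0$; the linear independence of $\mathcal{A}$ then forces $\widetilde{\beta}=0$, a contradiction. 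Hence the weak $*$-Property components span $\mathbb{R}^m$, and I can select $m$ of them that are linearly independent. Linearly independent vectors cannot be scalar multiples of one another, so these $m$ components are pairwise nonequivalent, giving $\mid\mathbb{P}\mid\,\ge m$.

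The crucial design choice, and the only genuinely delicate point, is the passage to the closure $\overline{\mathcal{S}}$: it is precisely what guarantees that the supremum defining the norm is attained, which is what lets the maximiser be certified as a weak $*$-Property component in (ii). Everything else is routine bookkeeping: the continuity/density identification of the two suprema, the trivial set inclusion that upgrades a maximiser to a weak $*$-Property witness, and the linear-algebraic spanning argument that extracts (i) from (ii).
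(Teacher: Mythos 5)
Your proof is correct and rests on the same two ingredients as the paper's: compactness of $\overline{\mathcal{S}}$ forces the supremum defining the norm to be attained, and any maximiser of $\mathbf{a}\mapsto|\sum_k\beta_k\widehat{a}_k|$ over $\overline{\mathcal{S}}$ automatically satisfies the weak $*$-Property, combined with the orthogonal-complement argument against a proper span. The only difference is organisational — the paper proves (i) directly by producing a new weak $*$-component orthogonal complement argument and proves (ii) separately, whereas you prove (ii) first and deduce (i) from it — which is a slightly cleaner packaging of the same argument.
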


\begin{proof}
	(i)  Let $\overline{\mathcal{S}}$  denote the set of all components of $\mathcal{A}.$  If $\mathbb{P}$ is infinite then we have nothing to show. Otherwise   let $\mathbb{P}= \{ \textbf{a}_1, \textbf{a}_2, \ldots, \textbf{a}_r\},$  where $\textbf{a}_k=( \widehat{a}_1^k, \widehat{a}_2^k, \ldots, \widehat{a}_m^k),$ for any $ k, 1 \leq k \leq r.$ Suppose on the contrary that $  \mid \mathbb{P} \mid = \, r < m.$ Let $\mathbb{V}_1= span \{ \textbf{a}_1, \textbf{a}_2, \ldots, \textbf{a}_r\}$ and   let $\mathbb{V}_2= span \{ \textbf{a}: \textbf{a} \in \overline{\mathcal{S}}\}.$ Clearly, $\mathbb{V}_1 \subsetneq \mathbb{R}^m =  \mathbb{V}_2.$  So, $\mathbb{V}_1^ \perp \neq \emptyset,$  where 
	$\mathbb{V}_1^{\perp} =  \{ (\gamma_1, \gamma_2, \ldots, \gamma_m) \in \mathbb{R}^m : \sum_{k=1}^m \gamma_k \widehat{a}_k =0,  \, 1 \leq k \leq r \}. $ Take $(\gamma_1, \gamma_2, \ldots, \gamma_m) \in \mathbb{V}_1^\perp $  so that  $ |\sum_{k=1}^{m} \gamma_k \widehat{a}_k^i|=0,$ for any $ 1 \leq i \leq r.$
Consider the mapping	$f: \overline{\mathcal{S}} \to \mathbb{R}$ defined as $f((\widehat{b_1}, \widehat{b_2}, \ldots, \widehat{b_m}))=  | \sum_{k=1}^{m}  \gamma_k \widehat{b_k}|. $ Clearly $f$ is a continuous mapping on the compact set $\overline{\mathcal{S}}$ and so, $f$ attains its maximum at some element $\textbf{c}=(\widehat{c_1}, \widehat{c_2}, \ldots, \widehat{c_m}) \in \overline{\mathcal{S}}.$ Thus $ | \sum_{k=1}^{m}  \gamma_k \widehat{c_k}|= \sup \{ | \sum_{k=1}^{m} \gamma_k \widehat{b_k}|: (\widehat{b_1}, \widehat{b_2}, \ldots, \widehat{b_m}) \in \overline{\mathcal{S}}\} \neq 0,$ otherwise $\mathbb{V}_1^\perp \neq \emptyset.$ Clearly, $\textbf{c} \notin \mathbb{P}$ and $\textbf{c}$ satisfies the weak $*$-Property. This contradiction completes the proposition.\\ 

(ii) Observe that $\| \sum_{k=1}^{m} \beta_k \widetilde{a_k}\|= \sup \{ |\sum_{k=1}^{m} \beta_k a_i^k|: i \in \mathbb{N} \}.$ If the supremum is attained at some $s \in \mathbb{N}$ then $\| \sum_{k=1}^{m} \beta_k \widetilde{a_k}\|= |\sum_{k=1}^{m} \beta_k a_s^k|,$ where $(a_s^1, a_s^2, \ldots, a_s^m) \in \mathbb{P}.$ Otherwise we can find $(\widehat{a}_1, \widehat{a}_2, \ldots, \widehat{a}_m) \in \overline{\mathcal{S}} \setminus \mathcal{S}$ such that	$\| \sum_{k=1}^{m} \beta_k \widetilde{a_k} \|= \sup \{ |\sum_{k=1}^{m} \beta_k a_i^k|: i \in \mathbb{N} \}= | \sum_{k=1}^{m} \beta_k \widehat{a}_k|.$  Clearly, $(\widehat{a}_1, \widehat{a}_2, \ldots, \widehat{a}_m) \in \mathbb{P}$ and this completes the proof.
\end{proof}

In the following theorem we provide a sufficient condition for the embedding of a finite-dimensional subspace of $\ell_{\infty}$ into $\ell_\infty^n,$ for some $n \in \mathbb{N}.$

	\begin{theorem}\label{isometry}
		Let $\mathbb{W}$ be an $m$-dimensional subspace of $\ell_{\infty}$ and let  $\mathcal{A}= \{ \widetilde{a_1}, \widetilde{a_2}, \ldots, $ \\ $ \widetilde{a_m}\}$ be a basis of $\mathbb{W}.$  Suppose that $\mathbb{P}$ is the set of all nonequivalent components satisfying the weak $*$-Property and  $ \mid \mathbb{P} \mid \, = r.$ Then $\mathbb{W}$ can be embedded into $\ell_{\infty}^r.$
		\end{theorem}

	\begin{proof}
		Let $ \mathbb{P}=\{ \textbf{a}_1, \textbf{a}_2, \ldots, \textbf{a}_r\},$  where $\textbf{a}_k= (\widehat{a}_k^1, \widehat{a}_k^2, \ldots, \widehat{a}_k^m),$ for each  $ k, 1 \leq k \leq r.$
		We define   a map $ f : \mathbb{W} \to \ell_\infty^r$ as follows 
   \[ 	f \bigg(\sum_{k=1}^{m} \beta_k \widetilde{a_k}\bigg) =\bigg( \sum_{k=1}^{m} \beta_k \widehat{a_1}^k, \sum_{k=1}^{m} \beta_k \widehat{a_2}^k, \ldots, \sum_{k=1}^{m} \beta_k \widehat{a_r}^k\bigg).\]
   Clearly, $f$ is linear.
		From Proposition \ref{weak},  it is easy to observe that for any $\widetilde{\beta}=(\beta_1, \beta_2, \ldots, \beta_m) \in \mathbb{R}^m,$
		$ \| \sum_{k=1}^{m} \beta_k \widetilde{a_k} \| = 	 | \sum_{k=1}^{m} \beta_k \widehat{a_{t}}^k|,$ for some $ t \in \{ 1, 2, \ldots, r\}.$ Therefore, for any $\beta_1, \beta_2, \ldots, \beta_m \in \mathbb{R},$ 
		\begin{eqnarray*}
			\|\sum_{k=1}^{m} \beta_k \widetilde{a_k}\|&=& \max\bigg\{   \bigg| \sum_{k=1}^{m} \beta_k \widehat{a_1}^k \bigg|,  \bigg| \sum_{k=1}^{m} \beta_k \widehat{a_2}^k \bigg|, \ldots, \bigg| \sum_{k=1}^{m} \beta_k \widehat{a_r}^k \bigg| \bigg\}\\
			&=& \bigg\| \bigg( \sum_{k=1}^{m} \beta_k \widehat{a_1}^k, \sum_{k=1}^{m} \beta_k \widehat{a_2}^k, \ldots, \sum_{k=1}^{m} \beta_k \widehat{a_r}^k\bigg)\bigg\|\\
			&=& \|f(\sum_{k=1}^{m} \beta_k\widetilde{a_k})\|.
		\end{eqnarray*}
		Therefore, $f$ is an isometry. Thus $ \mathbb{W}$ is isometrically isomorphic to $\mathbb{V}= span \{ \widetilde{b_1}, \widetilde{b_2}, $ $ \ldots, \widetilde{b_m}\},$ where $ \widetilde{b_k}= ( \widehat{a_1}^k, \widehat{a_2}^k, \ldots, \widehat{a_r}^k)\in \ell_\infty^{r},$ for any $ 1 \leq k \leq m.$
		 Therefore, $\mathbb{W}$ can be embedded into $\ell_\infty^{r}.$ 
			\end{proof}

Since any subspace of $\ell_{\infty}^n$ is polyhedral, the above theorem also provides a sufficient condition for a finite-dimensional subspace of $\ell_\infty$ to be a polyhedral subspace.

\begin{remark} 
	Let $\mathcal{A}= \{ \widetilde{a}_1, \widetilde{a}_2, \ldots, $ $ \widetilde{a}_m\}$ and $\mathcal{B}= \{ \widetilde{b}_1, \widetilde{b}_2, \ldots, $ $ \widetilde{b}_m\}$ be    bases of two  $m$-dimensional subspaces $\mathbb{V}$ and  $\mathbb{W}$ of $\ell_{\infty}$  respectively.  Suppose that the set of nonequivalent components of $\mathcal{A}$ and $\mathcal{B}$ satisfying the weak $*$-Property  are finite and same. Then using similar arguments given in Theorem \ref{isometry}, it can be seen easily that $\mathbb{V} $ is isometrically isomorphic to $ \mathbb{W}.$
\end{remark}
	
	In Theorem \ref{isometry},  the condition is not optimal, i.e., it is possible that $ \mid \mathbb{P} \mid =r$ but the space can be embedded into $\ell_{\infty}^s,$ for some $s < r,$ which is illustrated by the following example.
	
	\begin{example}\label{example:optimal}
		Let $v_1= (3, \frac{5}{2}, 2, 0, 0, \ldots), ~v_2= (2, \frac{5}{2}, 3, 0, 0, \ldots) \in \ell_{\infty}$ and $\mathbb{Y}= span \{ v_1, v_2 \}.$ Then it is immediate that the set of all nonequivalent  components satisfying the weak $*$-Property is  $\{(3, 2), (\frac{5}{2}, \frac{5}{2}). (2, 3)\}$ and the set of all nonequivalent components satisfying the $*$-Property is $\{ (3,2), (2,3)\}.$ It is easy to verify  that for any $(\beta_1, \beta_2) \in \mathbb{R}^2,$ $\| \beta_1 v_1+ \beta_2 v_2\| = | 3 \beta_1+ 2 \beta_2|$ or  $\| \beta_1 v_1+ \beta_2 v_2\| = | 2 \beta_1+ 3 \beta_2|.$ So, $\mathbb{Y}$ can be embedded into $ \ell_{\infty}^2,$ moreover $\mathbb{Y}$ is isometrically isomorphic to $\ell_{\infty}^2.$
	\end{example}

		The following theorem provides  an  optimal condition  for the embedding of a finite-dimensional subspace of $\ell_{\infty}$ into $\ell_{\infty}^n.$ 
	
	\begin{theorem}\label{number:facet}
			Let $\mathbb{W}$ be an $m$-dimensional subspace of $\ell_{\infty}$ and let  $\mathcal{A}= \{ \widetilde{a_1}, \widetilde{a_2}, \ldots,  $ $ \widetilde{a_m}\}$ be a basis of $\mathbb{W}.$
		Suppose that $\mathbb{P}$ is the set of all nonequivalent components satisfying the weak $*$-Property and $\mathbb{Q}$ is the set of all nonequivalent components satisfying the $*$-Property.
		Let $|\mathbb{P}| = | \mathbb{Q}|=r.$
	  Then
		\begin{itemize}
			\item[(i)]  the number of facets of $B_{\mathbb{W}}$ is $2r,$
			\item[(ii)] $\mathbb{W} $ can be embedded into $\ell_\infty^{n}$
			if and only if $r \leq n.$
			\item[(iii)]  $\mathbb{W} $ is isometrically isomorphic to $\ell_\infty^{m}$ if and only if $r   = m.$\\
		\end{itemize} 
	\end{theorem}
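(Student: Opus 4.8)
The plan is to reduce everything to a clean polyhedral description of $B_{\mathbb{W}}$ and then count facets. First note that since every component satisfying the $*$-Property also satisfies the weak $*$-Property, we always have $\mathbb{Q} \subseteq \mathbb{P}$; the hypothesis $|\mathbb{P}| = |\mathbb{Q}| = r$ therefore forces $\mathbb{P} = \mathbb{Q}$, so \emph{every} component in $\mathbb{P}$ in fact satisfies the strict $*$-Property. Writing $\mathbb{P} = \{\textbf{a}_1, \ldots, \textbf{a}_r\}$ with $\textbf{a}_j = (\widehat{a}_j^1, \ldots, \widehat{a}_j^m)$, Proposition \ref{weak}(ii) gives, for every $\widetilde{\beta} = (\beta_1, \ldots, \beta_m)$,
\[
\Big\| \sum_{k=1}^m \beta_k \widetilde{a_k} \Big\| = \max_{1 \le j \le r} \Big| \sum_{k=1}^m \beta_k \widehat{a}_j^k \Big|.
\]
Identifying $\mathbb{W}$ with $\mathbb{R}^m$ through the coordinates $\widetilde\beta$, this exhibits $B_{\mathbb{W}}$ as the polytope cut out by the $2r$ inequalities $\pm \sum_k \beta_k \widehat a_j^k \le 1$, $1 \le j \le r$ --- an explicit H-representation I will use throughout.

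For (i) I would argue that each of these $2r$ inequalities is irredundant and defines a genuine facet. Fix $\textbf{a}_j$; its $*$-Property supplies $\widetilde\beta$ with $|\sum_k \beta_k \widehat a_j^k| > |\sum_k \beta_k \widehat b_k|$ for all components $\textbf{b} \notin E_{\textbf{a}_j}$, in particular strictly larger than the values at every other $\textbf{a}_l$. After a sign change and scaling I may assume $\sum_k \beta_k \widehat a_j^k = 1$, so this $\widetilde\beta$ lies on the hyperplane $H_j = \{\sum_k \beta_k \widehat a_j^k = 1\}$ and satisfies every other constraint strictly; hence an entire $(m-1)$-dimensional neighbourhood of it inside $H_j$ lies in $B_{\mathbb{W}}$, forcing the face $B_{\mathbb{W}} \cap H_j$ to have dimension $m-1$, i.e. to be a facet. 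The same applies to $-\textbf{a}_j$. Since the $\textbf{a}_j$ are pairwise nonequivalent, the $2r$ supporting hyperplanes are pairwise distinct, so these are $2r$ distinct facets; and because every facet of a polytope must lie on one of the hyperplanes of any H-representation, there are no others. Thus $B_{\mathbb{W}}$ has exactly $2r$ facets.

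Part (ii) then follows from the invariance of the facet-count under isometric isomorphism. If $r \le n$, Theorem \ref{isometry} embeds $\mathbb{W}$ isometrically into $\ell_\infty^r$, and composing with the coordinate inclusion $\ell_\infty^r \hookrightarrow \ell_\infty^n$ embeds it into $\ell_\infty^n$. For the converse, suppose $\mathbb{W}$ embeds isometrically as a subspace $U \subseteq \ell_\infty^n$. Restricting the $n$ coordinate functionals to $U$ writes $B_U$ as the intersection of the $2n$ half-spaces $\{|u_j| \le 1\}$, so $B_U$ --- and hence $B_{\mathbb{W}}$ --- has at most $2n$ facets. By (i) it has exactly $2r$, whence $2r \le 2n$, i.e. $r \le n$.

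Finally, for (iii): $\ell_\infty^m$ is the cube, whose ball has $2m$ facets, so if $\mathbb{W} \cong \ell_\infty^m$ isometrically then (i) gives $2r = 2m$, i.e. $r = m$; conversely, if $r = m$ then Theorem \ref{isometry} embeds $\mathbb{W}$ isometrically into $\ell_\infty^{r} = \ell_\infty^{m}$, and since this embedding is injective its image is an $m$-dimensional subspace of the $m$-dimensional space $\ell_\infty^m$, hence all of it. I expect the main obstacle to lie in the facet analysis of (i) --- converting the $*$-Property into a relative-interior point of the candidate facet and checking the face is genuinely $(m-1)$-dimensional --- together with the two standard polytope facts I invoke (facets lie on the hyperplanes of any H-representation, and a central section of the cube by a subspace has at most $2n$ facets); the remaining parts are then bookkeeping with the isometric invariance of the facet count.
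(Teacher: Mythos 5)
Your proposal is correct and follows essentially the same route as the paper: both use the equality $\mathbb{P}=\mathbb{Q}$ to get a strict $*$-constant for each component, rescale it to produce a relative-interior point of the candidate facet (the paper phrases this as a point of $int(F_i)\cap\mathbb{V}$ inside the cube $B_{\ell_\infty^r}$ via Theorem \ref{isometry} and Proposition \ref{prop}, you phrase it as an irredundant inequality in an H-representation of $B_{\mathbb{W}}$ in $\mathbb{R}^m$ --- the same argument in different coordinates), and both obtain the upper bound from the fact that a section of $B_{\ell_\infty^n}$ has at most $2n$ facets. Parts (ii) and (iii) are handled identically in both.
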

	
	\begin{proof}
		
			(i)  	Let $ \mathbb{P}=\{ \textbf{a}_1, \textbf{a}_2, \ldots, \textbf{a}_r\},$  where $\textbf{a}_k= (\widehat{a}_k^1, \widehat{a}_k^2, \ldots, \widehat{a}_k^m),$ for each  $ k, 1 \leq k \leq r.$ Clearly $\mathbb{Q} \subset \mathbb{P}$ and $|\mathbb{P}|=|\mathbb{Q}|=r$ implies that $\mathbb{P}= \mathbb{Q}.$
				Using Theorem \ref{isometry}, we observe that
				 $ \mathbb{W}$ is isometrically isomorphic to a subspace $\mathbb{V}$ of $\ell_{\infty}^r$ and so, $\mathbb{W}$ is polyhedral. Let $ \mathcal{B}= \{ \widetilde{b_1}, \widetilde{b_2}, \ldots, \widetilde{b_m}\}$ be a basis of $\mathbb{V},$  where $ \widetilde{b_k}= ( \widehat{a_1}^k, \widehat{a_2}^k, \ldots, \widehat{a_r}^k)\in \ell_\infty^{r},$ for each $k,  1 \leq k \leq m.$ Then   $\mathbb{V}= span~ \mathcal{B}.$
				 Moreover, we observe that each component of $\mathcal{B}$ are nonequivalent and  satisfies the $*$-Property. 
		 Let $\pm F_1, \pm F_2, \ldots, \pm F_r$ be the facets of $B_{\ell_\infty^r}.$ Suppose that  for any $\widetilde{x}=(x_1, x_2, \ldots, x_n) \in F_i,$ we have $x_i=1.$  We want to show that $ F_i \cap \mathbb{V}$ is a facet of $B_{\mathbb{V}}.$  Since the $i$-th component of $\mathcal{B}$ satisfies the $*$-Property, there exist $ \beta_1, \beta_2, \ldots, \beta_m \in \mathbb{R}$ such that   
			\[
			\bigg|   \sum_{k=1}^{m} \beta_k \widehat{a}_i^k\bigg| > \max \bigg\{ \bigg| \sum_{k=1}^{m} \beta_k \widehat{a}_j^k\bigg| : j \in \{1,2, \ldots, r\} \setminus \{i\}\bigg\}.
			\]
				For each $k, 1 \leq k \leq m,$ choose 
			 $\gamma_k= \frac{\beta_k}{ \sum_{j=1}^{m} \beta_j \widehat{a}_i^j },$ for any $1 \leq k \leq m.$ Then it is easy to observe that $ \sum_{k=1}^{m} \gamma_k \widehat{a}_i^k=1$ and $ | \sum_{k=1}^{m} \gamma_k \widehat{a}_j^k |< 1,$ for any $ j \in \{ 1, 2,\ldots r\}\setminus \{i\}.$ Therefore, $ \sum_{k=1}^{m} \gamma_k \widetilde{b_k} \in int (F_i) \cap \mathbb{V},$ where $int(F_i)$ is the interior of $F_i$ with respect to the subspace topology of $F_i.$ From Proposition \ref{prop}, it follows that $ (F_i \cap \mathbb{V})$ is a facet of $B_\mathbb{V}.$ Thus for each $j, 1\leq j \leq r,$ $ (F_j \cap \mathbb{V})$ is a facet of $B_{\mathbb{V}}.$ It is easy to verify  that the number of facets of $B_{\mathbb{V}}$ is less than equal to the number of facets of $B_{\ell_{\infty}^r}.$ Therefore, the number of facets of $B_{\mathbb{V}}$ is $2r$ and consequently, the number of facets of $B_{\mathbb{W}}$ is $2r.$ This proves the first part of the theorem.
			 
				(ii) 	We just need to prove the necessary part of the theorem as the sufficient part follows directly from Theorem \ref{isometry}.  From (i) we obtain that the number of facets of $\mathbb{W}$ is $2r.$ If $\mathbb{W}$ can be embedded into $\ell_\infty^n$ then $ 2r\leq 2n$  and so, $ r \leq n.$ 
				
				(iii)  The proof follows from (i) and (ii).
	\end{proof}

	We now present   examples which  illustrate  Theorem \ref{number:facet}.

	\begin{example}\label{example:example1}
		
	(i) 	Let $p, q , a , b \in \mathbb{R}$ such that $p < 0< 1\leq a< b < q.$ Let $\widetilde{a_1}= ( p, a-t_1, a- t_2,\ldots, a- t_n, \ldots), \widetilde{a_2}= (q, b-t_1, b- t_2,  \ldots, b-t_n, \ldots) \in \ell_{\infty},$ where  $0 < t_n < 1,$ for any $n \geq 1$ and $t_n \rightarrow 0.$  Let $\mathbb{Y}_1=span\{ \widetilde{a_1}, \widetilde{a_2}\}.$ Then the components satisfying the weak $*$-Property and the $*$-Property are same, which are $(p, q), (a,b).$ So, from Theorem \ref{number:facet},
		$\mathbb{Y}_1$ is isometrically isomorphic to $\ell_{\infty}^2.$ 
		
		(ii)	Let $v_1= (7, -5, 0, \frac{1}{2}, \frac{2}{3}, \frac{3}{4}, \ldots, 1-\frac{1}{n}, \ldots), v_2=  (-5, 6, 1, \frac{3}{2}, \frac{5}{3}, \frac{7}{4}, $ $ \ldots,  2-\frac{1}{n}, \ldots)  \in \ell_{\infty}$ and let $\mathbb{Y}_2= span\{v_1, v_2\}.$ Observe that the set of nonequivalent components satisfying the weak $*$-Property and the set of nonequivalent components satisfying the $*$-Property are same, which is $\{ (7, -5), (-5, 6), $ $ (1,2)\}.$ So,  from Theorem \ref{number:facet}, $\mathbb{Y}_2$ can be embedded into $\ell_{\infty}^3$ and the number of facets of $B_{\mathbb{Y}_2}= 6.$ Since $\dim \mathbb{Y}_2$ is $ 2,$ $Ext(B_{\mathbb{Y}_2})=6.$
		
		(iii)	Let $v_1= ( 3, 0, 1, \frac{5}{2}, 0, 0, \ldots	), v_2= (2, 5, 0, 4, 0, 0, \ldots),  v_3 =( 1, 1, 5, \frac{7}{2}, 0, 0, \ldots) \in \ell_{\infty}$ and let $\mathbb{Y}_3= span \{v_1, v_2, v_3\}.$ The nonequivalent components satisfying the weak $*$-Property and the  $*$-Property are same, which are $(3, 2, 1), (0, 5, 1),  (1, 0, 5), (\frac{5}{2}, 4 , \frac{7}{2}).$ Therefore, from Theorem \ref{isometry}, $\mathbb{Y}_3$ can be embedded into $\ell_{\infty}^3$ and from Theorem \ref{number:facet} the number of facets of $B_{\mathbb{Y}_3}$ is $8$ and $\mathbb{Y}_3$ is not isometrically isomorphic to $\ell_{\infty}^3.$ Moreover, we observe that any two dimensional subspace of $\mathbb{Y}_3$ has exactly $3$ components satisfying the $*$-Property and therefore, any two dimensional subspace of $\mathbb{Y}_3$ is not isometrically isometric to $\ell_{\infty}^2.$ 
	\end{example}

	Considering the finite-dimensional space $\ell_{\infty}^n,$ we can state the following theorem, the proof of which is in the same spirit as Theorem \ref{number:facet}.

	\begin{theorem}\label{star}
			Let $\mathbb{W}$ be an $m$-dimensional subspace of $\ell_{\infty}^n$ and let  $\mathcal{A}= \{ \widetilde{a_1}, \widetilde{a_2}, \ldots, $ $ \widetilde{a_m}\}$ be a basis of $\mathbb{W}.$	 Suppose that the number of  nonequivalent components of $\mathcal{A}$ satisfying the $*$-Property is $r.$ Then 
		\begin{itemize}
			\item[(i)] the number of facets of $B_{\mathbb{W}}$ are $2r.$
				\item[(ii)] $\mathbb{W}$ can be embedded into $\ell_\infty^s$ if and only if $r \leq s.$
				\item[(iii)] $\mathbb{W}$ is isometrically isomorphic to $\ell_\infty^{m}$ if and only if $r=m.$
		\end{itemize} 
	\end{theorem}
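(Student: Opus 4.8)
The plan is to mirror the proof of Theorem \ref{number:facet}, observing that the only structural difference is that in $\ell_\infty^n$ the closure $\overline{\mathcal{S}}$ already equals the finite set $\mathcal{S}$, since there are only $n$ components and no limit points are introduced. Consequently the weak $*$-Property and the $*$-Property coincide (as already remarked in the excerpt), so I only need to track the single number $r$ of nonequivalent components satisfying the $*$-Property, and the set $\mathbb{P}$ of such components is automatically finite.

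For part (i), I would first establish the analogue of the isometric embedding. Writing $\mathbb{P} = \{\textbf{a}_1, \ldots, \textbf{a}_r\}$ with $\textbf{a}_k = (\widehat{a}_k^1, \ldots, \widehat{a}_k^m)$, I define the linear map $f : \mathbb{W} \to \ell_\infty^r$ by
\[
f\bigg(\sum_{k=1}^m \beta_k \widetilde{a_k}\bigg) = \bigg(\sum_{k=1}^m \beta_k \widehat{a}_1^k, \ldots, \sum_{k=1}^m \beta_k \widehat{a}_r^k\bigg),
\]
and the key claim is that for every $\widetilde{\beta}$ the norm $\|\sum_k \beta_k \widetilde{a_k}\| = \max_{1\le i \le n} |\sum_k \beta_k a_i^k|$ is attained at some component lying in $\mathbb{P}$. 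This is the finite-dimensional counterpart of Proposition \ref{weak}(ii): any component at which the max is attained satisfies the weak $*$-Property (witnessed by $\widetilde{\beta}$ itself, up to the equivalence $E_i$), hence by $r = |\mathbb{P}|$ it is equivalent to one of the $\textbf{a}_k$, and equivalent components give the same absolute value. Thus $f$ is an isometry onto its image $\mathbb{V} = \mathrm{span}\{\widetilde{b_1}, \ldots, \widetilde{b_m}\}$ with $\widetilde{b_k} = (\widehat{a}_1^k, \ldots, \widehat{a}_r^k) \in \ell_\infty^r$, and each component of $\mathcal{B} = \{\widetilde{b_k}\}$ is nonequivalent and satisfies the $*$-Property. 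Then, exactly as in Theorem \ref{number:facet}(i), for each facet $\pm F_i$ of $B_{\ell_\infty^r}$ I use the $*$-Property of the $i$-th component to produce $\gamma_1, \ldots, \gamma_m$ with $\sum_k \gamma_k \widehat{a}_i^k = 1$ and $|\sum_k \gamma_k \widehat{a}_j^k| < 1$ for $j \neq i$; this places $\sum_k \gamma_k \widetilde{b_k}$ in $\mathrm{int}(F_i) \cap \mathbb{V}$, and Proposition \ref{prop} gives that $F_i \cap \mathbb{V}$ is a facet of $B_{\mathbb{V}}$. Since every facet of $B_{\mathbb{V}}$ arises this way, the count is exactly $2r$.

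For parts (ii) and (iii) the arguments are formally identical to those in Theorem \ref{number:facet}. The sufficiency in (ii) is the embedding just constructed (into $\ell_\infty^r \subseteq \ell_\infty^s$ when $r \le s$); for necessity, if $\mathbb{W}$ embeds isometrically into $\ell_\infty^s$ then its facet count $2r$ cannot exceed the facet count $2s$ of $B_{\ell_\infty^s}$, forcing $r \le s$. Part (iii) then follows: $\mathbb{W} \cong \ell_\infty^m$ requires $m$ to equal the embedding dimension, and by (i)–(ii) the minimal such dimension is $r$, so isometric isomorphism with $\ell_\infty^m$ holds precisely when $r = m$ (using that $r \ge m$ always, the finite-dimensional analogue of Proposition \ref{weak}(i)).

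I expect the only genuine subtlety — and hence the main obstacle — to be verifying cleanly that the maximum defining the norm is always attained at a component of $\mathbb{P}$ rather than merely at some component satisfying the weak $*$-Property. In the $\ell_\infty^n$ setting this is where the coincidence of the two properties does the real work: because $\overline{\mathcal{S}} = \mathcal{S}$ is finite, a component maximizing $|\sum_k \beta_k a_i^k|$ over all $i$ satisfies the $*$-Property strictly with respect to the nonequivalent competitors (after discarding equivalent components, which contribute equal values), so it lies in $\mathbb{P}$ up to equivalence. Care is needed to handle ties among nonequivalent maximizers, but a standard perturbation of $\widetilde{\beta}$ — or simply noting that any maximizer's equivalence class must already appear in $\mathbb{P}$ since $|\mathbb{P}| = r$ exhausts the nonequivalent $*$-Property components — resolves this without additional machinery.
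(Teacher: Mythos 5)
Your proposal is correct and takes exactly the route the paper intends: the paper offers no separate proof of Theorem \ref{star}, saying only that it is ``in the same spirit as Theorem \ref{number:facet},'' and your argument is precisely that adaptation, with the one genuine subtlety (that the norm $\max_i|\sum_k\beta_k a_i^k|$ is always attained at a component of $\mathbb{P}$) correctly identified and handled. One caution: the paper's remark that the weak and strict $*$-Properties coincide in $\ell_\infty^n$ is not literally true --- for $\{(3,\tfrac52,2),(2,\tfrac52,3)\}\subset\ell_\infty^3$ the component $(\tfrac52,\tfrac52)$ satisfies the weak but not the strict property, being the average of the other two --- so your intermediate step ``a maximizer satisfies the weak $*$-Property, hence lies in $\mathbb{P}$'' does not stand on its own; it is the perturbation argument in your final paragraph (shrinking the maximizer set to a single equivalence class, which is then exposed and hence in $\mathbb{P}$) that actually closes this step, and with it the proof is complete.
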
  

Considering the two-dimensional subspace $\mathbb{W}_1= span \{ v_1, v_2\}$ of $\ell_{\infty}^3$   where $v_1= (3 ,0, 2), v_2=(0, 3, 2) ,$ it is easy to  observe that each component satisfies the $*$-Property. From Theorem \ref{star}, we conclude that $\mathbb{W}_1$ is not isometrically isomorphic to $\ell_{\infty}^2$ and the number of facets of $B_{\mathbb{W}_1}$ is $6.$ Again, consider a two dimensional subspace $\mathbb{W}_2= span \{ w_1, w_2\}$ of $\ell_{\infty}^3,$ where $w_1= (4, \frac{7}{2}, 3), w_2= (3, \frac{7}{2}, 4).$ It is easy to observe that the components satisfying the $*$-Property are $(4,3), (3,4).$ From Theorem \ref{star}, we conclude that $\mathbb{W}_2$ is isometrically isomorphic to $\ell_{\infty}^2.$\\

We next provide a necessary condition for a finite-dimensional subspace of $\ell_{\infty}$ to be polyhedral.
	
	\begin{theorem}\label{polyhedral}
			Let $\mathbb{Y}$ be an $m$-dimensional subspace of $\ell_{\infty}$ and let  $\mathcal{A}= \{ \widetilde{a_1}, \widetilde{a_2}, \ldots,  $ $ \widetilde{a_m}\}$ be a basis of $\mathbb{Y}.$ If
	 $\mathbb{Y}$ is polyhedral  then  the number of nonequivalent components of $\mathcal{A}$ satisfying the $*$-Property is finite.
	\end{theorem}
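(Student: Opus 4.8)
The plan is to pass to the dual space and reinterpret the $*$-Property as an exposedness condition there. Identify $\mathbb{Y}^*$ with $\mathbb{R}^m$ through the coordinates dual to the basis $\mathcal{A}$, so that an element $x=\sum_{k=1}^m\beta_k\widetilde{a_k}$ has coordinate vector $\beta=(\beta_1,\dots,\beta_m)$ and, writing $\langle\textbf{b},\beta\rangle=\sum_{k=1}^m\widehat{b}_k\beta_k$ for a component $\textbf{b}=(\widehat{b}_1,\dots,\widehat{b}_m)$, we have $\|x\|=\sup_{\textbf{b}\in\overline{\mathcal{S}}}|\langle\textbf{b},\beta\rangle|$. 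The first step is to show that the dual unit ball is exactly $K:=\overline{\operatorname{conv}}\{\pm\textbf{b}:\textbf{b}\in\overline{\mathcal{S}}\}$. Since each $\widetilde{a_k}\in\ell_\infty$, the set $\overline{\mathcal{S}}$ is closed and bounded, hence a compact subset of $\mathbb{R}^m$, so $K$ is a compact symmetric convex body; the displayed norm formula says precisely that the norm on $\mathbb{Y}$ is the support function of $K$, and therefore $B_{\mathbb{Y}^*}=K$.

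The second and central step is to prove that every component $\textbf{a}\in\overline{\mathcal{S}}$ satisfying the $*$-Property is an extreme point of $K=B_{\mathbb{Y}^*}$. Given the vector $\widetilde{\beta}$ witnessing the $*$-Property, after replacing $\widetilde{\beta}$ by $-\widetilde{\beta}$ if necessary I may assume $\langle\textbf{a},\widetilde{\beta}\rangle=M>0$, where $M=|\langle\textbf{a},\widetilde{\beta}\rangle|$. Then over the generating set $G=\{\pm\textbf{b}:\textbf{b}\in\overline{\mathcal{S}}\}$ the linear functional $\langle\,\cdot\,,\widetilde{\beta}\rangle$ attains the value $M$ at $\textbf{a}$, the value $-M$ at $-\textbf{a}$, and, because $|\langle\textbf{b},\widetilde{\beta}\rangle|<M$ for every $\textbf{b}\in\overline{\mathcal{S}}\setminus E_{\textbf{a}}$, a value strictly below $M$ at every other point of $G$. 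Hence $\textbf{a}$ is the unique maximizer of $\widetilde{\beta}$ on $G$. Since $\max_K\langle\,\cdot\,,\widetilde{\beta}\rangle=\max_G\langle\,\cdot\,,\widetilde{\beta}\rangle=M$, and by Milman's theorem $Ext(K)\subseteq G$, the exposed face $\{x\in K:\langle x,\widetilde{\beta}\rangle=M\}$ is the convex hull of those extreme points of $K$ that lie in $G$ and attain $M$, all of which equal $\textbf{a}$. Thus this face is $\{\textbf{a}\}$, so $\textbf{a}$ is an exposed, and in particular extreme, point of $B_{\mathbb{Y}^*}$.

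Finally I would invoke polyhedrality. If $\mathbb{Y}$ is polyhedral then $B_{\mathbb{Y}}$ is a polytope, so its polar $K=B_{\mathbb{Y}^*}$ is again a polytope and therefore has only finitely many extreme points (equivalently, $B_{\mathbb{Y}}$ has finitely many facets, which correspond to the extreme points of $B_{\mathbb{Y}^*}$). By the second step, the set of components satisfying the $*$-Property is contained in $Ext(K)$; since two components are equivalent exactly when they agree up to sign while $Ext(K)$ is symmetric, the number of nonequivalent such components is at most $\tfrac{1}{2}|Ext(K)|$, hence finite.

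I expect the main obstacle to be the second step, specifically the point that the $*$-Property only guarantees strict domination of $\textbf{a}$ over the \emph{generators} $\overline{\mathcal{S}}$, whereas exposedness must be verified against the entire convex hull $K$; bridging this gap is exactly where compactness of $\overline{\mathcal{S}}$ and Milman's theorem (extreme points of the closed convex hull of a compact set lie in that set) are needed. A secondary technical point is the distinction between $\mathcal{S}$ and its closure $\overline{\mathcal{S}}$, which is harmless here because the suprema defining the norm over $\mathcal{S}$ and over $\overline{\mathcal{S}}$ coincide by continuity.
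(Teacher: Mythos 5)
Your proof is correct, and it takes a genuinely different route from the paper's. The paper argues in the primal: it first replaces $\mathbb{Y}$ by an isometric copy $\mathbb{W}\subset\ell_\infty$ whose $k$-th coordinate realizes the $k$-th nonequivalent weak $*$-Property component, and then shows that each component $\textbf{a}_k$ with the $*$-Property yields a point of $\mathbb{W}$ in the face $F_k=\{\widetilde{t}\in B_{\ell_\infty}:t_k=1\}$, so the sets $F_k\cap\mathbb{W}$ are pairwise distinct faces of $B_{\mathbb{W}}$, and infinitely many of them would contradict polyhedrality. You instead dualize: the norm of $\mathbb{Y}$ in the coordinates of $\mathcal{A}$ is the support function of $K=\overline{\operatorname{conv}}\{\pm\textbf{b}:\textbf{b}\in\overline{\mathcal{S}}\}$, so $B_{\mathbb{Y}^*}=K$; the $*$-Property says exactly that $\widetilde{\beta}$ strictly exposes $\textbf{a}$ among the generators, and compactness of $\overline{\mathcal{S}}$ together with Milman's theorem upgrades this to exposedness in $K$ itself, after which polyhedrality of $B_{\mathbb{Y}}$ makes the polar body $K$ a polytope with finitely many extreme points. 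The two arguments are polar to one another (facets of $B_{\mathbb{Y}}$ correspond to extreme points of $B_{\mathbb{Y}^*}$), but your version buys two things: the passage from ``infinitely many faces'' to ``infinitely many facets,'' which the paper leaves implicit, is replaced by the explicit bound $\tfrac12|Ext(K)|$, and you identify the $*$-Property components intrinsically as exposed points of the dual ball, which also illuminates Theorem \ref{number:facet}(i). One pedantic point: the normalization $M>0$ tacitly uses $\overline{\mathcal{S}}\setminus E_{\textbf{a}}\neq\emptyset$ (otherwise the strict inequality is vacuous and $M$ could be $0$), but in that degenerate case there is only one nonequivalent component and the statement is trivial, so nothing is lost.
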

	
	\begin{proof}
			Let $\mathbb{Q}$ be the set of all nonequivalent components satisfying the $*$-Property.  
		Let $\mathbb{P}= \{ \textbf{a}_1, \textbf{a}_2, \ldots\}$ be the set of all nonequivalent components satisfying the weak $*$-Property,  where $\textbf{a}_k =(\widehat{a}_k^1, \widehat{a}_k^2, \ldots, \widehat{a}_k^m),$ $ k \in \mathbb{N}.$ 
		Consider $\mathcal{B}= \{ \widetilde{w}_1, \widetilde{w}_2, \ldots, \widetilde{w_m}\},$ where $\widetilde{w_i}= (\widehat{a}_1^i, \widehat{a}_2^i, \ldots) \in \ell_\infty,$ for each $i, 1 \leq i \leq m.$ So, the $k$-th component of $\mathcal{B}$ is $\textbf{a}_k.$ Let $\mathbb{W}= span ~ \mathcal{B}.$
		Now we define a map $f: \mathbb{Y} \to \mathbb{W} $ as $ f( \sum_{k=1}^{m} \beta_k \widetilde{a_k})= \sum_{k=1}^{m} \beta_k \widetilde{w_k}.$ Using Proposition \ref{weak} and following similarly as in  Theorem \ref{isometry}
	 we  observe that $\mathbb{Y}$ is isometrically isomorphic to $\mathbb{W}.$ 
	  Since $\mathbb{Y}$ is polyhedral, $\mathbb{W}$ is polyhedral too. Suppose on the contrary  $\mathbb{Q}$ is infinite and we assume that  $\mathbb{Q}= \{ \textbf{a}_i : i \in \mathbb{I} \subset \mathbb{N}\}.$
	 Observe that any two components of $\mathcal{B}$  are  nonequivalent and  any element of $\mathbb{Q}$  satisfies the $*$-Property.
		  Suppose that $\textbf{a}_k \in \mathbb{Q}$ then    there exists $\widetilde{\beta}=(\beta_1, \beta_2, \ldots, \beta_m) \in \mathbb{R}^m$ such that
		$
		|   \sum_{i=1}^{m} \beta_i \widehat{a}_k^i| > 	|  \sum_{i=1}^{m} \beta_i \widehat{a}_j^i   |,
		$ for any $ j \in \mathbb{N} \setminus\{k\}.$ 	Take 
		\[F_k= \bigg\{(t_1, t_2, \ldots, t_n, \ldots )\in \ell_\infty: t_k=1, ~ |t_j| \leq 1 ~  \forall j \in \mathbb{N} \setminus \{k\}\bigg\}.\]
		It is easy to observe that for any $k \in \mathbb{N},$ $F_k$ is a face of $B_{\ell_{\infty}}.$ 	For each $i, 1 \leq i \leq m,$ choose $\gamma_i = \frac{\beta_i}{\sum_{j=1}^{m} \beta_j \widehat{a}_k^j}.$ Then $ \sum_{i=1}^{m} \gamma_i \widehat{a}_k^i=1$ and $ | \sum_{i=1}^{m} \gamma_i \widehat{a}_j^i | < 1,$ for any $ j \in \mathbb{N} \setminus \{k\}.$
			So, $\sum_{i=1}^{m} \gamma_i \widetilde{w_i} \in F_k \cap \mathbb{W}.$
		Clearly, $F_k \cap \mathbb{W}$ is a face of $B_{\mathbb{W}}$ and for any two $k_1, k_2 \in \mathbb{I},$ $F_{k_1} \cap \mathbb{W} \neq F_{k_2} \cap \mathbb{W}.$ So, for any $k \in \mathbb{I},$ $F_k \cap \mathbb{W}$ is a face of $B_{\mathbb{W}}.$	Since $\mathbb{I}$ is infinite, there exists infinitely many faces in $B_{\mathbb{W}}.$ This implies the number of facets of $B_{\mathbb{W}}$ is infinite, which contradicts that $\mathbb{W}$ is polyhedral. Hence the theorem.
			\end{proof}

	In the following example we exhibit a finite dimensional non-polyhedral  subspace of $\ell_{\infty}$ for which  the number of nonequivalent components satisfying $*$-Property is infinite.

	\begin{example}\label{cos}
	 Consider $\mathcal{A}=\{ \widetilde{a_1}, \widetilde{a_2}\}, $ where $\widetilde{a_1}= (\cos \frac{\pi}{2}, \cos \frac{\pi}{4}, \ldots, \cos \frac{\pi}{2n}, \ldots)$ and $ \widetilde{a_2}= (\sin \frac{\pi}{2}, \sin \frac{\pi}{4}, \ldots, \sin \frac{\pi}{2n}, \ldots).$ Let $\mathbb{Y}_4= span ~\mathcal{A}.$
For each $i \in \mathbb{N},$	the $i$-th component of $\mathcal{A}$ is $(\cos \frac{\pi}{2i}, \sin \frac{\pi}{2i}).$	Clearly, any two components  of $\mathcal{A}$ are nonequivalent. 
		Here $\overline{\mathcal{S}}= \{ (cos \frac{\pi}{2i}, \sin \frac{\pi}{2i}), (1,0): i \in \mathbb{N}\}. $
	Choosing  $\beta_1= \cos \frac{\pi}{2i},  \beta_2=\sin \frac{\pi}{2i},$ we observe that $ |\beta_1 \cos \frac{\pi}{2i}+ \beta_2 \sin \frac{\pi}{2i} |=1 > |\beta_1 \cos \frac{\pi}{2j}+ \beta_2 \sin \frac{\pi}{2j} |,$ for any $ j \in \mathbb{N}\setminus \{i\},$ so that the component $ (cos \frac{\pi}{2i}, \sin \frac{\pi}{2i})$ satisfies the $*$-Property, for each $i \in \mathbb{N}.$ Again taking $\beta_1=1, \beta_2=0$ it is easy to verify that  the component $(1, 0)$ satisfies the $*$-Property.
	So, each component  satisfies the $*$-Property. Therefore, from Theorem \ref{polyhedral}, $\mathbb{Y}_4$ is not polyhedral. So, $\mathbb{Y}_4$ can not be embedded into $\ell_\infty^n,$ for any $n \in \mathbb{N}.$
		\end{example}

The converse of Theorem \ref{polyhedral} is not known to us. We put it in the form of following conjecture.

\begin{conjecture}
	Let $\mathbb{Y}$ be an $m$-dimensional subspace of $\ell_{\infty}$ and let  $\mathcal{A}= \{ \widetilde{a_1}, \widetilde{a_2}, \ldots,  $ $ \widetilde{a_m}\}$ be a basis of $\mathbb{Y}.$ Then
	$\mathbb{Y}$ is polyhedral  if and only  if  the number of nonequivalent components of $\mathcal{A}$ satisfying the $*$-Property is finite.
\end{conjecture}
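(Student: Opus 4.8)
The plan is to translate the entire statement into the language of polarity in $\mathbb{R}^m$ and then invoke the classical theorem of Straszewicz on the density of exposed points. Write $\overline{\mathcal{S}}$ for the set of components of $\mathcal{A}$ and set $K=\overline{\mathcal{S}}\cup(-\overline{\mathcal{S}})$. Since each $\widetilde{a_k}$ lies in $\ell_\infty$, all coordinates are bounded, so $\mathcal{S}$ is bounded and $K$ is compact; since $\mathcal{A}$ is linearly independent, no nontrivial functional annihilates every component, so $K$ spans $\mathbb{R}^m$ and hence $0$ lies in the interior of $\mathrm{conv}(K)$. As already used in Proposition \ref{weak}, for every $\widetilde{\beta}=(\beta_1,\dots,\beta_m)$ one has $\|\sum_k\beta_k\widetilde{a_k}\|=\sup\{|\langle\widetilde\beta,\textbf{b}\rangle|:\textbf{b}\in\overline{\mathcal{S}}\}$, where $\langle\widetilde\beta,\textbf{b}\rangle=\sum_k\beta_k\widehat{b}_k$. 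Reading $\mathbb{Y}$ in the coordinates $\widetilde\beta$, this says exactly that $B_{\mathbb{Y}}$ is the polar body $K^{\circ}$. By the bipolar theorem $K^{\circ\circ}=\mathrm{conv}(K)$, and polarity exchanges polytopes with polytopes whenever $0$ is interior; thus $\mathbb{Y}$ is polyhedral if and only if $\mathrm{conv}(K)$ is a polytope, i.e. if and only if $\mathrm{conv}(K)$ has finitely many extreme points.

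The second step is to identify the components satisfying the $*$-Property with the exposed points of $\mathrm{conv}(K)$. A component $\textbf{a}\in\overline{\mathcal{S}}$ satisfies the $*$-Property exactly when there is a $\widetilde\beta$ with $|\langle\widetilde\beta,\textbf{a}\rangle|>|\langle\widetilde\beta,\textbf{b}\rangle|$ for all $\textbf{b}\in\overline{\mathcal{S}}\setminus E_{\textbf{a}}$; since $|\langle\widetilde\beta,\cdot\rangle|$ is unchanged under $\textbf{b}\mapsto-\textbf{b}$, this is the same as saying that $\pm\textbf{a}$ are the unique maximisers of $|\langle\widetilde\beta,\cdot\rangle|$ over the symmetric compact set $K$. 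Choosing the sign of $\widetilde\beta$ so that $\langle\widetilde\beta,\textbf{a}\rangle>0$, the latter is precisely the statement that $\widetilde\beta$ exposes the point $\textbf{a}$ in $\mathrm{conv}(K)$. Conversely every exposed point of $\mathrm{conv}(K)$ lies in $K$ (the extreme points of the convex hull of a compact set belong to the set) and, reading off its sign, yields a nonequivalent component with the $*$-Property. Hence the number of nonequivalent components satisfying the $*$-Property equals the number of antipodal pairs $\{\textbf{a},-\textbf{a}\}$ of exposed points of $\mathrm{conv}(K)$.

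With these two dictionaries the conjecture becomes a statement about $\mathrm{conv}(K)$ alone. The forward implication is Theorem \ref{polyhedral}: if $\mathbb{Y}$ is polyhedral then $\mathrm{conv}(K)$ is a polytope, so it has finitely many extreme points and a fortiori finitely many exposed points. For the converse I would argue as follows. Assume the number of nonequivalent $*$-Property components is finite; by the second step $\mathrm{conv}(K)$ has only finitely many exposed points. Straszewicz's theorem asserts that in a finite-dimensional compact convex set the exposed points are dense in the extreme points, i.e. every extreme point is a limit of exposed points. As a finite set is closed, the extreme points of $\mathrm{conv}(K)$ are contained in this finite set of exposed points, so $\mathrm{conv}(K)$ has finitely many extreme points and is therefore a polytope. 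By the first step $\mathbb{Y}$ is polyhedral, which closes the equivalence.

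The routine points---compactness and spanning of $K$, the norm formula, and the polytope/polar correspondence---are standard; the one genuinely substantial ingredient is Straszewicz's theorem, and the place where care is needed is the exposed-point characterisation of the $*$-Property, in particular the bookkeeping of the equivalence classes $E_{\textbf{a}}$ in the degenerate case where $-\textbf{a}$ does (or does not) itself belong to $\overline{\mathcal{S}}$. The conceptual obstacle the conjecture really hides is precisely the possibility of extreme points of $\mathrm{conv}(K)$ that fail to be exposed: without the density theorem one cannot rule out infinitely many such non-exposed extreme points coexisting with only finitely many exposed ones, and it is exactly this pathology that Straszewicz's theorem forbids.
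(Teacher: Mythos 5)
You should first note that the paper contains \emph{no} proof of this statement: it is posed as a conjecture, and the authors say explicitly that the converse of Theorem \ref{polyhedral} is not known to them. Only the ``only if'' direction is proved in the paper (Theorem \ref{polyhedral}, by extracting infinitely many distinct faces of $B_{\mathbb{Y}}$ from infinitely many $*$-Property components), so there is no in-paper argument to compare yours against; what you have written is an attempt to settle the open direction, and as far as I can check it succeeds. Your three dictionaries are all sound. First, since $\|\sum_k\beta_k\widetilde{a_k}\|=\sup\{|\langle\widetilde\beta,\textbf{b}\rangle|:\textbf{b}\in\overline{\mathcal{S}}\}$ is the support function of $\mathrm{conv}(K)$ at $\widetilde\beta$, the coordinate map $\widetilde\beta\mapsto\sum_k\beta_k\widetilde{a_k}$ identifies $B_{\mathbb{Y}}$ with $K^{\circ}$; linear independence of $\mathcal{A}$ makes $K$ spanning, so $0\in\mathrm{int}\,\mathrm{conv}(K)$, boundedness of the $\widetilde{a_k}$ makes $K$ compact, and the bipolar theorem together with the fact that polarity preserves polytopes gives ``$\mathbb{Y}$ polyhedral iff $\mathrm{conv}(K)$ is a polytope''. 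Second, the $*$-Property demands only \emph{pointwise} strict domination over $\overline{\mathcal{S}}\setminus E_{\textbf{a}}$, and that is exactly equivalent to the suitably signed $\widetilde\beta$ having $\textbf{a}$ as its unique maximiser over $K$, hence over $\mathrm{conv}(K)$ by Carath\'eodory; conversely every exposed point of $\mathrm{conv}(K)$ lies in $K$ by Milman's theorem and its exposing functional witnesses the $*$-Property. Third, Straszewicz's theorem kills the only remaining pathology: if the exposed points form a finite (hence closed) set, every extreme point, being a limit of exposed points, is itself exposed, so $\mathrm{conv}(K)$ has finitely many extreme points and is a polytope by Minkowski's theorem.

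If you write this up, be fully explicit at two places: (a) the verification that the face of $\mathrm{conv}(K)$ exposed by $\widetilde\beta$ equals the convex hull of the maximisers of $\langle\widetilde\beta,\cdot\rangle$ on $K$, which is what upgrades ``unique maximiser over $K$'' to ``exposed point of $\mathrm{conv}(K)$''; and (b) the degenerate bookkeeping of $E_{\textbf{a}}$ (the cases $\textbf{a}=\mathbf{0}$ and $-\textbf{a}\in\overline{\mathcal{S}}$), which you already flag and which cause no trouble. Subject to that write-up, your argument resolves the conjecture affirmatively, with Straszewicz's theorem as the decisive ingredient the authors appear to be missing.
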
  
We end this section by presenting a scheme  of different kind of possible subspaces of $\ell_{\infty}$.

\tikzstyle{decision} = [diamond, draw, fill=blue!20,
text width=2cm, text badly centered, node distance=2cm, inner sep=0pt]
\tikzstyle{block} = [ draw, fill=white!20,text width=5.2em, text centered, rounded corners, minimum height=4em]
\tikzstyle{line} = [draw, very thick, color=black!50, -latex']
\tikzstyle{cloud} = [draw, ellipse,fill=red!20, node distance=.5cm,
minimum height=2em]

\begin{tikzpicture}[scale=1, node distance = 1.5cm, auto]
	\node [block] (block1) {Subspaces of $\ell_{\infty}$};
	\node [block, below right of=block1,  node distance=3.5cm] (block3) {Infinite-dimensional subspace};
	\node [block, below left of=block1,  node distance=5cm] (block4) {Finite-dimensional subspace};
	\node[block, below of=block4, node distance=3.2cm] (block5) {Polyhedral but not isometrically isomorphic to $\ell_{\infty}^n(n>1)$  [ $\mathbb{Y}_2$, see Example \ref{example:example1}(ii)]};
	\node[block, below left of =block4, node distance=4cm] (block6) {Isometrically isomorphic to $\ell_{\infty}^n (n>1)$  [ $\mathbb{Y}_1$, see Example \ref{example:example1} (i)]};
	\node[block, below right of =block4, node distance=4.5cm] (block7) {Not polyhedral [ $\mathbb{Y}_4$, see Example \ref{cos}]};
	
	\node[block, below right of =block7, node distance=3.8cm] (block13) {Does not conatin any polyhedral subspace [ $\mathbb{Y}_4$, see Example \ref{cos}]};
	
	\node[block, below  of =block7, node distance=4.5cm] (block14) { Conatins polyhedral subspace [ $\mathbb{Y}_4 \oplus_{\infty} \mathbb{Y}_1$, see Example \ref{cos}, \ref{example:example1}(ii)]};

	\node[block, below left of= block5, node distance=5.3cm] (block11) {Contains subspace isometrically isomorphic to  $\ell_{\infty}^n (n>1)\,$ [ $\mathbb{Y}_1 \oplus_{\infty} \mathbb{Y}_2,$ see Example \ref{example:example1}, \ref{example:example1}(ii) ]};
	
	\node[block, below  of= block5, node distance=4cm] (block12) { Does not  contain any $\ell_{\infty}^n(n>1),$ [ $\mathbb{Y}_3,$ see Example \ref{example:example1}(iii)]};

	
	\node[block, below left of=block3, node distance=2.5cm] (block9) {Polyhedral [e.g: $c_0$]};
	\node[block,  below right of=block3, node distance=3cm] (block10) {Not polyhedral};
	
	\node[block,  below of=block10, node distance=4cm] (block15) {Does not contain any polyhedral subspace [ Not known ? ] };
	
	\node[block,  below left of=block10, node distance=3.2cm] (block16) {Contains polyhedral subspace [e.g: $c$]};

	\path [line] (block1) -- (block3);
	\path[line] (block1)-- (block4);
	\path[line] (block4)-- (block5);
	\path[line] (block4)-- (block6);
	\path[line] (block4)-- (block7);
	\path[line] (block3)-- (block9);
	\path[line] (block3)-- (block10);
	\path[line](block5)--(block11);
	\path[line](block5)--(block12);
	\path[line](block7)--(block13);
	\path[line](block7)--(block14);
	\path[line](block10)--(block15);
	\path[line](block10)--(block16);
	
	
\end{tikzpicture}

Note that $c$ is the subspace of $\ell_\infty$ consisting of all convergent sequences and $c_0$ is the subspace of $\ell_\infty$ consisting of all sequences converging to $0.$ The following question  remains to be answered :
Does there exist an infinite dimensional subspace $\mathbb{Y}$ of $\ell_\infty$ such that $\mathbb{Y}$  is not polyhedral
and it does not contain any polyhedral subspace? \\

	\section*{Extreme points of the unit ball of a subspace of $\ell_\infty^n$ and extreme contraction}
	
	The extreme points of the unit ball of  $\ell_\infty^n$ are of the form $(\pm1,\pm1, \ldots, \pm1)$ and there are $2^n$ such points.  Given a subspace $\mathbb{W}$ of $ \ell_\infty^n$ we are interested to find the extreme points of the unit ball of the subspace. To do so we introduce the notion of minimal face of an element  in a finite-dimensional polyhedral Banach space. 	Let $\mathbb{X}$ be a finite-dimensional polyhedral Banach space and let $v \in S_{\mathbb{X}}.$ Observe that  if there exists an $i$-face $F$ of $B_{\mathbb{X}}$ such that $v \in F,$ for some $1 \leq i < n-1,$ then there exists an $(i+1)$-face of $B_{\mathbb{X}}$ containing $v.$ If $F$ is an  $i$-face containing $v$  such that there exists no $(i-1)$-face of $B_{\mathbb{X}}$ containing $v,$  then $F$ is said to be the minimal face of $v.$ Note that the minimal face of an element $v \in S_{\mathbb{X}}$ is always unique.  For  two Banach spaces $(\mathbb{Y}_1, \|.\|_{\mathbb{Y}_1})$ and  $(\mathbb{Y}_2, \|.\|_{\mathbb{Y}_2}), $ we use the notation $\mathbb{Y}_1 \oplus_{\infty} \mathbb{Y}_2$ to denote the $\ell_{\infty}$-direct sum of $\mathbb{Y}_1$ and $\mathbb{Y}_2,$i.e., for any $(y_1, y_2) \in \mathbb{Y}_1 \oplus_{\infty} \mathbb{Y}_2,$ $\|(y_1, y_2)\|= \sup \{ \|y_1\|_{\mathbb{Y}_1}, \|y_2\|_{\mathbb{Y}_2}\}.$ 	Further we need the following definitions.
	
	\begin{definition}
		Let $\mathcal{A}= \{ \widetilde{a_1}, \widetilde{a_2}, \ldots, \widetilde{a_m}\}$ be a set of linearly independent elements in $\ell_\infty^n,$ where $\widetilde{a_k}= (a_1^k, a_2^k, \ldots, a_n^k),$ for each $k, 1 \leq k \leq m.$
		
		\begin{itemize}
			\item[(i)] A set $S \subset \{1,2,\ldots,n \}$ is said to be \textit{a $*$-set of $\mathcal{A}$} if there exists $ \widetilde{\beta}=(\beta_1, \beta_2, \ldots, \beta_m) \in \mathbb{R}^m$ such that for any $s \in S,$
			\[	\bigg| \sum_{k=1}^{m} \beta_k a_{s}^k\bigg|= 1 > \max \bigg\{ \bigg| \sum_{k=1}^{m} \beta_k a_j^k\bigg|: j \in \{ 1, 2, \ldots, n\}\setminus S \bigg\}	.	\] 
			In this case $\widetilde{\beta}$ is defined as \textit{a $*$-constant of $S.$}	
			
			\item[(ii)] A $*$-constant $\widetilde{\beta}=(\beta_1, \beta_2, \ldots, \beta_m) \in \mathbb{R}^m$ is said to a \textit{maximal $*$-constant} if there exists no $*$-constant $\widetilde{\alpha}=(\alpha_1, \alpha_2, \ldots, \alpha_m) \in \mathbb{R}^m$ of $ S_1 \subset \{1,2,\ldots,n\}$   such that $ S \subsetneq S_1$ and $ \sum_{k=1}^{m} \beta_k a_s^k= \sum_{k=1}^{m} \alpha_k a_s^k,$ for each $ s \in S.$
		\end{itemize}
	\end{definition}

Next we characterize extreme points of the unit ball of a subspace of a polyhedral Banach space by using the above definition. Before that we need the following proposition.

	\begin{prop}\label{prop:extreme}
	Let $\mathbb{X}$ be a polyhedral Banach space and let $\mathbb{Y}$ be a proper subspace of $\mathbb{X}.$ Let   $F$ be the minimal face of $B_{\mathbb{X}}$ containing $v \in S_{\mathbb{Y}}.$ Then $v$ is an extreme point of $B_{\mathbb{Y}}$ if and only if  $F \cap \mathbb{Y}= \{v\}.$
\end{prop}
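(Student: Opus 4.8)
The plan is to prove the biconditional by characterizing extreme points of $B_{\mathbb{Y}}$ geometrically in terms of the ambient minimal face $F$. The key observation is that $F$, being the minimal face of $B_{\mathbb{X}}$ containing $v$, is the smallest face of $B_{\mathbb{X}}$ to which $v$ belongs, and by the uniqueness of minimal faces noted in the excerpt, $v$ lies in the relative interior of $F$. This relative-interior membership is the engine that drives both directions.

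For the converse direction (assume $F\cap\mathbb{Y}=\{v\}$, conclude $v\in Ext(B_{\mathbb{Y}})$), I would argue by contradiction. Suppose $v$ is not extreme in $B_{\mathbb{Y}}$, so $v=\tfrac{1}{2}(y_1+y_2)$ with $y_1,y_2\in B_{\mathbb{Y}}$ distinct. Since $y_1,y_2\in B_{\mathbb{X}}$ and $v$ lies in the face $F$ of $B_{\mathbb{X}}$, the defining property of a face forces $y_1,y_2\in F$ (a face contains any segment whose relative interior it meets). Hence $y_1,y_2\in F\cap\mathbb{Y}=\{v\}$, contradicting $y_1\neq y_2$. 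This shows $v$ must be extreme.

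For the forward direction (assume $v\in Ext(B_{\mathbb{Y}})$, conclude $F\cap\mathbb{Y}=\{v\}$), I would again argue by contradiction, using the minimality of $F$ crucially. Suppose there exists $w\in F\cap\mathbb{Y}$ with $w\neq v$. Because $v$ lies in the relative interior of $F$, I can move a small distance from $v$ toward and away from $w$ while staying inside $F$: that is, there exists $\varepsilon>0$ such that both $v+\varepsilon(w-v)$ and $v-\varepsilon(w-v)$ lie in $F\subseteq B_{\mathbb{X}}$. Since $w,v\in\mathbb{Y}$ and $\mathbb{Y}$ is a subspace, both of these points also lie in $\mathbb{Y}$, hence in $B_{\mathbb{Y}}$. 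Writing $v$ as the midpoint $v=\tfrac{1}{2}\big((v+\varepsilon(w-v))+(v-\varepsilon(w-v))\big)$ of two distinct points of $B_{\mathbb{Y}}$ contradicts that $v$ is extreme in $B_{\mathbb{Y}}$. Therefore $F\cap\mathbb{Y}=\{v\}$.

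The main obstacle is making the relative-interior argument rigorous: I must verify that $v$ genuinely lies in the relative interior of its minimal face $F$, so that a short bidirectional segment through $v$ within $F$ exists. This follows from minimality—if $v$ were on the relative boundary of $F$, it would belong to a proper subface of $F$, contradicting that $F$ is the smallest face containing $v$—but this step relies on the polyhedral structure and the standard fact that faces of a polytope decompose into relative interiors of subfaces. I would also need the elementary but essential lemma that a face $Q=B_{\mathbb{X}}\cap\delta M$ of a convex body absorbs any segment in $B_{\mathbb{X}}$ whose relative interior meets $Q$; this is immediate from the supporting-hyperplane description of $Q$ in the definition of face given earlier.
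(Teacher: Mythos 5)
Your proof is correct and follows essentially the same route as the paper: the sufficient direction via the supporting-functional (extreme-set) property of the face $F$, and the necessary direction via the fact that minimality places $v$ in the relative interior of $F$, so that a second point $w\in F\cap\mathbb{Y}$ yields a nontrivial segment in $B_{\mathbb{Y}}$ with midpoint $v$. Your small-$\varepsilon$ bidirectional perturbation is a slightly cleaner packaging of the paper's argument, which extends the chord through $v$ and $w$ to the boundary of $F$, but the underlying idea is identical.
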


\begin{proof}
	Let us first prove the sufficient part of the theorem. Suppose on the contrary that $v \notin Ext(B_{\mathbb{Y}}).$ Then there exist $w_1, w_2 \in S_{\mathbb{Y}}$ such that $v = \frac{1}{2}(w_1+ w_2).$ Since $F$ is a face of $B_{\mathbb{X}}$ such that $v \in F$ and let $F= \delta M \cap S_{\mathbb{X}},$ for some $\delta M,$ a boundary of a closed half space $M$ in $\mathbb{X}$. Now it is easy to observe that there exists a functional $f \in S_{\mathbb{X}^*}$ such that $\delta M = \{ x \in \mathbb{X}: f(x)=1\}.$ Therefore, 
	$	\frac{1}{2}(f(w_1)+ f(w_2))= f(\frac{1}{2}(w_1+ w_2))= f(v)=1 \implies f(w_1)=f(w_2)=1.$ 
	So, $w_1, w_2 \in \delta M \cap S_{\mathbb{Y}}=F.$ That contradicts that $F \cap \mathbb{Y}=\{v\}.$	Next we prove the necessary part.   Let $F$ be the minimal face of $v,$ so $v \in int (F).$ Suppose on the contrary that $F \cap \mathbb{Y} =  \{ v, w\}.$ Let $L$ be the staright line passing through $v, w$ and let $ L$ intersects the boundary of the face $F$ at two points $w_1, w_2 \in F.$ Since $v \in int(F)$ and $v \in L$ it can be easily seen that $v = (1-t) w_1+ tw_2,$ for some $ t \in (0,1).$ This contradicts that $v \in Ext(B_{\mathbb{Y}}).$ This completes the proof of necessary part.
	\end{proof}

	Now we are in a situation to characterize the extreme points of the unit ball of a subspace of $\ell_\infty^n.$ 
	
	\begin{theorem}\label{number:extreme}
		Let  $\mathcal{A}= \{ \widetilde{a_1}, \widetilde{a_2}, \ldots, \widetilde{a_m}\}$ be a set of linearly independent elements in $\ell_\infty^n$ and let $\mathbb{W}= span~ \mathcal{A}.$ Then $\sum_{k=1}^{m} \beta_k \widetilde{a_k}$ is an extreme point of $B_{\mathbb{W}}$ if and only if $\widetilde{\beta}=(\beta_1, \beta_2, \ldots, \beta_m) $ is a maximal $*$-constant.
	\end{theorem}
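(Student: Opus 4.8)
The plan is to connect the algebraic notion of a maximal $*$-constant with the geometric characterization of extreme points furnished by Proposition \ref{prop:extreme}. Writing $v = \sum_{k=1}^m \beta_k \widetilde{a_k} \in S_{\mathbb{W}}$, the norm $\|v\| = \max_j |\sum_k \beta_k a_j^k|$ is attained precisely on the index set $S = \{ s : |\sum_k \beta_k a_s^k| = 1\}$. This $S$ is exactly the set on which $\widetilde{\beta}$ witnesses the $*$-set condition, so $\widetilde{\beta}$ being a $*$-constant of $S$ is automatic once $\|v\|=1$. The geometric content lives in which facets of $B_{\ell_\infty^n}$ contain $v$: the facet $\{x : x_s = 1\}$ (resp. $\{x : x_s = -1\}$) contains $v$ iff $\sum_k \beta_k a_s^k = 1$ (resp. $=-1$), and the minimal face $F$ of $v$ is the intersection of precisely these facets.

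First I would make explicit the description of $F \cap \mathbb{W}$. A point $w = \sum_k \gamma_k \widetilde{a_k} \in \mathbb{W}$ lies in $F$ iff $w$ sits on every facet containing $v$, i.e.\ iff $\sum_k \gamma_k a_s^k = \varepsilon_s$ for each $s \in S$, where $\varepsilon_s = \mathrm{sign}(\sum_k \beta_k a_s^k) \in \{\pm 1\}$, together with the inequalities $|\sum_k \gamma_k a_j^k| \le 1$ for $j \notin S$. By Proposition \ref{prop:extreme}, $v \in Ext(B_{\mathbb{W}})$ iff $F \cap \mathbb{W} = \{v\}$, which amounts to saying that the linear system $\sum_k \gamma_k a_s^k = \varepsilon_s$ ($s \in S$) has $\widetilde{\beta}$ as its \emph{unique} admissible solution in $\mathbb{W}$.

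Next I would translate uniqueness into the maximality condition. For the forward direction, suppose $\widetilde{\beta}$ is a maximal $*$-constant but $v \notin Ext(B_{\mathbb{W}})$; then $F \cap \mathbb{W}$ contains some $w = \sum_k \alpha_k \widetilde{a_k} \neq v$. Since $w \in F \cap S_{\mathbb{W}}$, its own active index set $S_1 = \{ j : |\sum_k \alpha_k a_j^k| = 1\}$ satisfies $S \subseteq S_1$ with agreement $\sum_k \alpha_k a_s^k = \sum_k \beta_k a_s^k = \varepsilon_s$ on $S$; linear independence of $\mathcal{A}$ forces $\widetilde{\alpha} \neq \widetilde{\beta}$, hence $S \subsetneq S_1$, and $\widetilde{\alpha}$ is a $*$-constant of $S_1$, directly contradicting maximality of $\widetilde{\beta}$. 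For the converse, if $\widetilde{\beta}$ is not maximal, there is a $*$-constant $\widetilde{\alpha}$ of some $S_1 \supsetneq S$ agreeing with $\widetilde{\beta}$ on $S$; the corresponding $w = \sum_k \alpha_k \widetilde{a_k}$ then lies in $F \cap \mathbb{W}$ and is distinct from $v$ (again by linear independence, since $S \subsetneq S_1$ forces a genuinely different coefficient vector), so $F \cap \mathbb{W} \neq \{v\}$ and $v \notin Ext(B_{\mathbb{W}})$.

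The main obstacle I anticipate is the bookkeeping around signs and the precise matching between the $*$-set data and the facet-membership data — in particular, verifying that $F$, the minimal face in the sense defined before the theorem, really is the intersection of exactly the facets indexed by $S$ with the correct signs $\varepsilon_s$, and that a larger active set $S_1$ genuinely corresponds to a strictly smaller (lower-dimensional) face so that the new point is forced to differ from $v$. The subtle point is ensuring that when $S \subsetneq S_1$ the associated coefficient vectors cannot coincide: this is where linear independence of $\{\widetilde{a_1}, \ldots, \widetilde{a_m}\}$ must be invoked carefully, since two distinct index sets could in principle be compatible with the same $\widetilde{\beta}$ only if the extra constraints from $S_1 \setminus S$ are automatically satisfied — which is exactly the degenerate case the maximality clause is designed to rule out.
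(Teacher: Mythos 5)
Your overall strategy is the same as the paper's: identify the minimal face $F$ of $v=\sum_k\beta_k\widetilde{a_k}$ with the active index set $S$, invoke Proposition \ref{prop:extreme} to reduce extremality to $F\cap\mathbb{W}=\{v\}$, and translate that condition into maximality of the $*$-constant. Your converse direction is correct, and in fact slightly more careful than the paper's, since you justify explicitly why the new point $w=\sum_k\alpha_k\widetilde{a_k}$ must differ from $v$ (if $\widetilde{\alpha}=\widetilde{\beta}$ then $\widetilde{\beta}$ would be a $*$-constant of both $S$ and $S_1\supsetneq S$, which is impossible because a $*$-constant of $S$ satisfies $|\sum_k\beta_k a_j^k|<1$ for $j\notin S$).

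There is, however, a genuine gap in your forward direction. From $v\notin Ext(B_{\mathbb{W}})$ you take an arbitrary $w=\sum_k\alpha_k\widetilde{a_k}\in (F\cap\mathbb{W})\setminus\{v\}$ and claim that linear independence gives $\widetilde{\alpha}\neq\widetilde{\beta}$, ``hence $S\subsetneq S_1$.'' That implication is false: two distinct points can both lie in the relative interior of $F$ and then have the \emph{same} active set. Concretely, take $\mathbb{W}=\ell_\infty^2$, $v=(1,0)$, $w=(1,\tfrac12)$; both lie in the facet $\{x:x_1=1\}$ and both have active set $\{1\}$, so $\widetilde{\alpha}$ is only a $*$-constant of $S_1=S$ and no contradiction with maximality is produced from this $w$. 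The fix is exactly the step the paper takes (borrowed from the necessary part of Proposition \ref{prop:extreme}): since $F\cap\mathbb{W}$ is a compact convex set containing the segment from $v$ to $w$, extend the line through $v$ and $w$ until it meets the relative boundary of $F$; the resulting point lies in a proper subface $F_1\subsetneq F$, so at least one coordinate outside $S$ becomes active there, its active set $S_1$ strictly contains $S$, and its coefficient vector is a $*$-constant of $S_1$ agreeing with $\widetilde{\beta}$ on $S$ --- which is what actually contradicts maximality. With that one repair your argument coincides with the paper's proof.
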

	
	\begin{proof}
		Let us first prove the sufficient part of the theorem. Let $\widetilde{\beta}$ be a maximal $*$-constant. Then $ \widetilde{\beta}$ is a $*$-constant of an $*$-set $S \subset \{ 1,2, \ldots, n\}.$  This implies \[| \sum_{k=1}^{m} \beta_k a_{s}^k|=1 > \max\bigg\{ | \sum_{k=1}^{m}\beta_k a_q^k|: q \in \{1, 2, \ldots, n\}\setminus S\bigg\},\] for any $s \in S.$ Take the face $F$ of $B_{\ell_\infty^n}$ such that for any $ \widetilde{x}=(x_1, x_2, \ldots, x_n) \in F,$ we have $x_s= \sum_{k=1}^{m} \beta_k a_s^k,$ for any $s \in S.$ Clearly,   $ \sum_{k=1}^{m} \beta_k \widetilde{a_k} \in F \cap \mathbb{W}$ and $F$ is the minimal face of $\sum_{k=1}^m \beta_k \widetilde{a_k}.$  Following Proposition \ref{prop:extreme} we only need to verify that $F \cap \mathbb{W}= \{ \sum_{k=1}\beta_k \widetilde{a_k}\}$ to conclude that $ \sum_{k=1}\beta_k \widetilde{a_k}$  is an extreme point of $B_{\mathbb{W}}.$  If not, then suppose that $| F \cap \mathbb{W}|> 1.$ Proceeding in the same way as  in the necessary part of Proposition \ref{prop:extreme}, we observe that there exists a face $F_1 \subsetneq F$ of $B_{\ell_\infty^n}$ such that $ F_1 \cap \mathbb{W} \neq \emptyset.$ Let $ \sum_{k=1}^{m} \gamma_k \widetilde{a_k} \in F_1 \cap \mathbb{W}.$ Since $ F_1 \subsetneq F,$ it is immediate that $ \sum_{k=1}^{m} \gamma_k a_s^k= \sum_{k=1}^{m} \beta_k a_s^k,$ for any $s \in S$ and $ | \sum_{k=1}^{m} \gamma_k a_p^k| =1,$ for some $p \notin S.$ Take $ S_1 = S \cup \{ p\}.$ Clearly, $ \widetilde{\gamma}=(\gamma_1, \gamma_2, \ldots, \gamma_m)$ is a $*$-constant of $S_1,$ this contradicts that $ \widetilde{\beta}$ is a maximal $*$-constant. This completes the proof of the sufficient part. 	We next prove the necessary part. Suppose that $\sum_{k=1}^{m} \beta_k \widetilde{a_k}$ is an extreme point of $B_{\mathbb{W}}$ and $ F$ is the minimal face of $ \sum_{k=1}^{m} \beta_k \widetilde{a_k}.$ Then from Proposition \ref{prop:extreme}, we get  $ F \cap \mathbb{W}= \{ \sum_{k=1}^{m} \beta_k \widetilde{a_k}\}.$ Without loss of generality we assume that $F$ is an $(n-p)$-face of $B_{\ell_\infty^n}$ and for any $ \widetilde{x}=(x_1, x_2, \ldots, x_n) \in F,$ we have $| x_{i_t}| =1,$ for any $ t \in \{ 1, 2, \ldots, p\}.$ Note that $\{ i_1, i_2, \ldots, i_p\}  \subset \{1,2, \ldots, n\}.$ It is now easy to observe that $ \widetilde{\beta}$ is a $*$-constant of $S= \{ i_1, i_2, \ldots, i_p\}.$ Suppose on the contrary that $ \widetilde{\beta}$ is not a maximal $*$-constant. Then there exists a $*$-set $S_1$ such that $ S \subsetneq S_1$ and for any $1 \leq t \leq p,$ $ \sum_{k=1}^{m} \beta_k a_{i_t}^k= \sum_{k=1}^{m} \alpha_k a_{i_t}^k,$ for some $*$-constant $ \widetilde{\alpha}=(\alpha_1, \alpha_2, \ldots, \alpha_m)\in \mathbb{R}^m$ of $S_1.$ It is easy to observe that $ \sum_{k=1}^{m} \alpha_k \widetilde{a_k} \in F \cap \mathbb{W}.$ This contradicts that $ F \cap \mathbb{W}= \{ \sum_{k=1}^{m} \beta_k \widetilde{a_k}\}.$  This completes the proof.
	\end{proof}

\begin{remark}
	Let $ \mathbb{Y}_1, \mathbb{Y}_2$ be two finite-dimensional subspace of $\ell_\infty.$ Then a necessary condition for having an isometric isomorphism between $ \mathbb{Y}_1$ and $ \mathbb{Y}_2$ can be obtained from Theorem \ref{number:facet} and Theorem \ref{number:extreme}. 
\end{remark}

In the rest of the article we focus on  the number of extreme contraction of the space $\mathbb{L}(\mathbb{X}, \ell_{\infty}^n),$ where $\mathbb{X}$ is a finite-dimensional polyhedral Banach space. To do so we need the following theorem.

\begin{theorem}\label{directsum}
		Let $\mathbb{X}$ be an $m$-dimensional polyhedral Banach space  and let $\mathbb{Y}$ be an $n$-dimensional Banach space. Suppose that $|Ext(B_{\mathbb{X}})|=2r.$ Then $\mathbb{L}(\mathbb{X}, \mathbb{Y})$ is isometrically isomorphic  to an $mn$-dimensional subspace of 	$ \underbrace{\mathbb{Y} \oplus_{\infty} \mathbb{Y}\oplus_{\infty} \ldots \oplus_{\infty} \mathbb{Y}}_{r-times}.$
\end{theorem}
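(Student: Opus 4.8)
The plan is to detect the operator norm through the finitely many extreme points of $B_{\mathbb{X}}$ and then record the values at these points as coordinates in an $\ell_{\infty}$-direct sum. First I would use the symmetry of the unit ball: since $B_{\mathbb{X}}$ is balanced, $x \in Ext(B_{\mathbb{X}})$ if and only if $-x \in Ext(B_{\mathbb{X}})$, and $0 \notin Ext(B_{\mathbb{X}})$, so the map $x \mapsto -x$ pairs up the extreme points without fixed points. Hence $|Ext(B_{\mathbb{X}})| = 2r$ lets me list
\[
Ext(B_{\mathbb{X}}) = \{ \pm x_1, \pm x_2, \ldots, \pm x_r \}.
\]

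The key step is the norm identity: for every $T \in \mathbb{L}(\mathbb{X}, \mathbb{Y})$,
\[
\|T\| = \max_{1 \leq i \leq r} \|T x_i\|.
\]
To prove it, I would start from $\|T\| = \sup_{x \in B_{\mathbb{X}}} \|Tx\|$. As $\mathbb{X}$ is finite-dimensional and polyhedral, $B_{\mathbb{X}}$ is a polytope equal to the convex hull of its finitely many extreme points, and the function $x \mapsto \|Tx\|$ is convex; therefore its supremum over the polytope $B_{\mathbb{X}}$ is attained at a vertex, giving $\|T\| = \max_{x \in Ext(B_{\mathbb{X}})} \|Tx\|$. Since $\|T(-x_i)\| = \|Tx_i\|$, the $2r$ quantities collapse to the $r$ values $\|Tx_i\|$, which yields the identity.

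With this in hand I define the linear map $\Phi : \mathbb{L}(\mathbb{X}, \mathbb{Y}) \to \underbrace{\mathbb{Y} \oplus_{\infty} \mathbb{Y} \oplus_{\infty} \cdots \oplus_{\infty} \mathbb{Y}}_{r\text{-times}}$ by $\Phi(T) = (Tx_1, Tx_2, \ldots, Tx_r)$. By the definition of the $\ell_{\infty}$-direct sum norm together with the identity above,
\[
\|\Phi(T)\| = \max_{1 \leq i \leq r} \|Tx_i\| = \|T\|,
\]
so $\Phi$ is a linear isometry. In particular $\Phi$ is injective, whence $\mathbb{L}(\mathbb{X}, \mathbb{Y})$ is isometrically isomorphic to its image, a subspace of the $r$-fold $\ell_{\infty}$-sum. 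Finally, since $\dim \mathbb{L}(\mathbb{X}, \mathbb{Y}) = \dim \mathbb{X} \cdot \dim \mathbb{Y} = mn$ and $\Phi$ is injective, the image is an $mn$-dimensional subspace, which completes the proof.

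The only delicate point is the norm identity in the second paragraph; everything else is routine linear algebra. Its justification reduces to two standard facts in finite dimensions: that a polyhedral unit ball is the convex hull of its finitely many extreme points (Minkowski/Krein--Milman), and that a convex function on such a polytope attains its maximum at an extreme point. Once these are invoked, the construction of $\Phi$ and the dimension count present no obstacle.
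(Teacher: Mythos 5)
Your proposal is correct and follows essentially the same route as the paper: enumerate $Ext(B_{\mathbb{X}})=\{\pm x_1,\ldots,\pm x_r\}$, map $T\mapsto (Tx_1,\ldots,Tx_r)$, verify that the operator norm equals the maximum of $\|Tx_i\|$ over the extreme points, and conclude with the dimension count. The only difference is that you spell out the justification of the norm identity (convexity of $x\mapsto\|Tx\|$ on the polytope $B_{\mathbb{X}}$ and the symmetry $\|T(-x_i)\|=\|Tx_i\|$), which the paper leaves implicit.
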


\begin{proof}
		Let $Ext(B_{\mathbb{X}})= \{ \pm v_1, \pm v_2, \ldots, \pm v_r\}.$ We construct a linear transformation $f$ from $\mathbb{L}(\mathbb{X}, \mathbb{Y}) $ to  $\underbrace{\mathbb{Y} \oplus_{\infty} \mathbb{Y}\oplus_{\infty} \ldots \oplus_{\infty} \mathbb{Y}}_{r-times}$ as follows:		
			\begin{eqnarray*}
										&f(A)& = (Av_1, Av_2, \ldots, Av_r),
		\end{eqnarray*}
		for each $A \in \mathbb{L}(\mathbb{X}, \mathbb{Y}).$  Clearly $f$ is well-defined and $f$ is linear. 	Next,	for each $A \in \mathbb{L}(\mathbb{X}, \mathbb{Y}),$ we have 
		\begin{eqnarray*}
			\|A\|&=& \sup\{ \|Ax\|: x \in S_{\mathbb{X}}\}\\
			&=& \max \{ \|Ax\|: x \in Ext(B_{\mathbb{X}})\}\\
			&=& \max \{ \|Av_1\|, \|Av_2\|, \ldots, \|Av_r\|\}\\
			&=& \| (Av_1, Av_2, \ldots, Av_r)\|_{\infty}\\
			&=& \|f(A)\|,
		\end{eqnarray*}
		so that $f$ is an isometry.  Since 	$ dim~\mathbb{L}(\mathbb{X}, \mathbb{Y}) = \underbrace{\mathbb{Y} \oplus_{\infty} \mathbb{Y}\oplus_{\infty} \ldots \oplus_{\infty} \mathbb{Y}}_{r-times} = mn,$ it follows that $f$ is an isometric isomorphism.
\end{proof}
Next we provide an easy proposition in which we characterize extreme points of a finite-dimensional Banach space $\mathbb{Y}$ when it is the $\ell_\infty$-direct sum  decomposition of finitely many spaces.

\begin{prop}\label{direct}
	Let $ \mathbb{Y} = \mathbb{Y}_1 \oplus_{\infty} \mathbb{Y}_2 \oplus_{\infty} \ldots \oplus_{\infty} \mathbb{Y}_m , $ where $\mathbb{Y}_1, \mathbb{Y}_2, \ldots, \mathbb{Y}_m$ are finite-dimensional polyhedral Banach spaces. Then $(y_1, y_2, \ldots, y_m) \in Ext(B_{ \mathbb{Y}}) $ if and only if $y_i \in Ext(B_{\mathbb{Y}_i}),$ for each $ i, 1 \leq i \leq m$ and consequently, 
	\[ |Ext( B_{ \mathbb{Y}})|= |Ext(B_{\mathbb{Y}_1})| \times | Ext(B_{\mathbb{Y}_2})| \times \ldots \times |Ext(B_{\mathbb{Y}_m})|.\] 
\end{prop}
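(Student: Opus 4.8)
The plan is to prove the characterization of extreme points of an $\ell_\infty$-direct sum by reducing to a two-factor argument, which is the conceptual core, and then obtain the general statement by an easy induction on $m$. First I would prove the forward (necessity) direction by contrapositive: suppose some coordinate $y_i \notin Ext(B_{\mathbb{Y}_i})$ while $\|(y_1,\ldots,y_m)\| = 1$. Then I can write $y_i = \frac{1}{2}(u + w)$ with $u \neq w$, both in $B_{\mathbb{Y}_i}$. The key point is that in the $\ell_\infty$-sum the norm is the maximum of the coordinate norms, so leaving all other coordinates fixed and perturbing only the $i$-th coordinate keeps the total norm at $1$. Concretely, the two points obtained by replacing $y_i$ with $u$ and with $w$ respectively both lie in $B_{\mathbb{Y}}$ and average to $(y_1,\ldots,y_m)$, contradicting extremality.

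For the converse (sufficiency), I would assume each $y_i \in Ext(B_{\mathbb{Y}_i})$ and show $(y_1,\ldots,y_m) \in Ext(B_{\mathbb{Y}})$. Suppose $(y_1,\ldots,y_m) = \frac{1}{2}\big((u_1,\ldots,u_m) + (w_1,\ldots,w_m)\big)$ with both points in $B_{\mathbb{Y}}$. The norm condition forces $\|u_i\|_{\mathbb{Y}_i} \leq 1$ and $\|w_i\|_{\mathbb{Y}_i} \leq 1$ for every $i$. Since $y_i \in S_{\mathbb{Y}_i}$ (each extreme point of a unit ball lies on the sphere) and $y_i = \frac{1}{2}(u_i + w_i)$, extremality of $y_i$ in $B_{\mathbb{Y}_i}$ gives $u_i = w_i = y_i$ for each $i$. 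Hence the two decomposing points coincide, establishing that $(y_1,\ldots,y_m)$ is extreme.

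From the equivalence just established, the cardinality formula is immediate: the extreme points of $B_{\mathbb{Y}}$ are exactly the tuples whose $i$-th entry ranges independently over $Ext(B_{\mathbb{Y}_i})$, so they are in bijection with the Cartesian product $\prod_{i=1}^m Ext(B_{\mathbb{Y}_i})$, yielding $|Ext(B_{\mathbb{Y}})| = \prod_{i=1}^m |Ext(B_{\mathbb{Y}_i})|$. Strictly speaking the displayed equivalence is stated for a single $m$-fold sum; I would either argue directly with $m$ coordinates as above (which works verbatim) or set it up as a two-factor lemma $\mathbb{Y} = \mathbb{Y}_1 \oplus_\infty (\mathbb{Y}_2 \oplus_\infty \cdots \oplus_\infty \mathbb{Y}_m)$ and induct, noting that an $\ell_\infty$-sum of polyhedral spaces is again polyhedral so the inductive hypothesis applies.

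The main obstacle is essentially bookkeeping rather than genuine difficulty: one must be careful that the perturbation argument in the necessity direction genuinely stays inside $B_{\mathbb{Y}}$, which relies precisely on the $\ell_\infty$ (max) nature of the norm — the same argument would fail for, say, an $\ell_1$-sum. The only subtle point worth stating explicitly is that an extreme point of a unit ball necessarily has norm one, so that each $y_i$ with $\|(y_1,\ldots,y_m)\|=1$ must itself satisfy $\|y_i\|_{\mathbb{Y}_i} = 1$ in the relevant coordinates; this is what licenses invoking extremality of $y_i$ coordinatewise in the converse direction.
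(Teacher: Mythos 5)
Your argument is correct and follows essentially the same route as the paper's: contrapositive/perturbation of a single coordinate for necessity, and coordinatewise use of extremality of each $y_i$ for sufficiency, with the cardinality formula falling out of the bijection. The only (cosmetic) difference is that you take the decomposing points of $y_i$ in $B_{\mathbb{Y}_i}$ rather than $S_{\mathbb{Y}_i}$ and you flag the norm-one subtlety explicitly, which is slightly more careful than the paper's write-up.
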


\begin{proof}
 Let	$(y_1, y_2, \ldots, y_m) \in Ext(B_{\mathbb{Y}}).$ Suppose on the contrary that $y_i \notin Ext(B_{\mathbb{Y}_i}),$ for some $i \in \{ 1, 2, \ldots, m\}.$ Then there exists $x_i, z_i \in S_{\mathbb{Y}_i}$ such that $y_i = \frac{1}{2} (x_i+ z_i).$ So, $(y_1, y_2, \ldots, y_m) = \frac{1}{2} [( y_1, \ldots, y_{i-1}, x_i, y_{i+1}, \ldots, y_m)+ ( y_1, \ldots, y_{i-1}, z_i, y_{i+1}, \ldots, y_m)].$ That contradicts that $(y_1, y_2, \ldots, y_m) \in Ext(B_{\mathbb{Y}}).$ This completes the proof of necessary part. 
 Conversely, let $y_i \in Ext(B_{\mathbb{Y}_i})$  for each $i, 1 \leq i \leq n.$  Let $(y_1, y_2, \ldots, y_m)= \frac{1}{2} [(x_1, x_2, \ldots, x_m)+(z_1, z_2, \ldots, z_m)] ,$ where $x_i, z_i \in S_{\mathbb{Y}_i}.$ So, $y_i = \frac{1}{2} (x_i+ z_i),$ for any $i, 1 \leq i \leq n.$ Since $y_i \in 
 Ext(B_{\mathbb{Y}_i}),$ we have $x_i = y_i= z_i,$ for any $1 \leq i \leq m. $ So, $(y_1, y_2, \ldots, y_m) \in Ext(B_{\mathbb{Y}}).$ This completes the proof.\\
\end{proof}

\begin{remark}\label{remark}
	Let $\mathbb{X}$ be an $m$-dimensional polyhedral Banach space such that $Ext(B_{\mathbb{X}})= \{ \pm \widetilde{v_1}, \pm \widetilde{v_2}, \ldots, \pm \widetilde{v_r}\},$ where $ \widetilde{v_k}= (v_1^k, v_2^k, \ldots, v_m^k).$   Then $\mathbb{L}(\mathbb{X}, \ell_\infty^n)$ is isometrically isomorphic  to an  $mn$-dimensional subspace $\mathbb{Y}$ of $\ell_\infty^{rn}.$ Moreover we observe that $ \mathbb{Y}=  \underbrace{\mathbb{W} \oplus_{\infty} \mathbb{W}\oplus_{\infty} \ldots \oplus_{\infty} \mathbb{W}}_{n-times},$ where $\mathbb{W}= span \{ \widetilde{w_1}, \widetilde{w_2}, \ldots, \widetilde{w_m} \},$ where $\widetilde{w_k}= ( v_k^1, v_k^2, \ldots, v_k^r) \in \ell_\infty^r.$ Therefore, $|Ext(B_{\mathbb{L}(\mathbb{X}, \ell_\infty^n)})|= |Ext(B_{\mathbb{W}})|^n.$\\
	\end{remark}
\begin{remark}
	Using Theorem \ref{directsum} and Proposition \ref{direct},  we observe that
	\begin{itemize} 
		\item[(i)]  $\mathbb{L}(\ell_1^m, \ell_\infty^n)$ is  isometrically isomorphic to $\ell_\infty^{mn}.$
		\item[(ii)]  If $\mathbb{Y}$ is a finite-dimensional Banach space, then $\mathbb{L}(\ell_1^m, \mathbb{Y})$ is  isometrically isomorphic to $ \underbrace{\mathbb{Y} \oplus_{\infty} \mathbb{Y}\oplus_{\infty} \ldots \oplus_{\infty} \mathbb{Y}}_{m-times}.$ Moreover, if $f $ is an isometric isomorphism between $ \underbrace{\mathbb{Y} \oplus_{\infty} \mathbb{Y}\oplus_{\infty} \ldots \oplus_{\infty} \mathbb{Y}}_{m-times}$ and  $\mathbb{L}(\ell_1^m, \mathbb{Y}),$ then
		$f(y_1, y_2, \ldots, y_r) \in Ext(B_{\mathbb{L}(\ell_1^m, \mathbb{Y})}) $ if and only if $ y_i \in Ext(B_{\mathbb{X}}),$ for each $i,  1 \leq i \leq r$ and $|Ext(B_{\mathbb{L}(\ell_1^m, \mathbb{Y})})|= |Ext(B_{\mathbb{Y}})|^n,$ whenever $\mathbb{Y}$ is polyhedral.
		\item[(iii)] $(\mathbb{L}(\ell_1^m, \mathbb{Y}))^*$ is an isometrically isomorphic to $ \underbrace{\mathbb{Y}^* \oplus_{1} \mathbb{Y}^* \oplus_{1} \ldots \oplus_{1} \mathbb{Y}^*}_{m-times}.$\\
	\end{itemize}
	\end{remark}

In the following lemma we explore the $*$-Property of direct sum of finitely many subspaces of $\ell_{\infty}^n.$ If $\mathcal{A}= \{ \widetilde{a_1}, \widetilde{a_2}, \ldots, \widetilde{a_m}\}$ be a set of linearly independent elements in $\ell_\infty^n$ then the set $\{ (\widetilde{a_1}, \theta), (\widetilde{a_2}, \theta), \ldots, (\widetilde{a_m}, \theta), $ $ (\theta, \widetilde{a_1}),   (\theta, \widetilde{a_2} ), \ldots, (\theta, \widetilde{a_m})\} $ is denoted by $\mathcal{A} \oplus \mathcal{A}.$

\begin{lemma}\label{lemma}
	Let  $\mathcal{A}= \{ \widetilde{a_1}, \widetilde{a_2}, \ldots, \widetilde{a_m}\}$ be a set of linearly independent elements in $\ell_\infty^n$ and let $\mathbb{W}= span~ \mathcal{A}.$
	Let $\mathbb{Y} =\mathbb{W} \oplus_{\infty} \mathbb{W}. $ If the number of nonequivalent components of  $\mathcal{A}$ satisfying  the $*$-Property is $r$ then the number of nonequivalent components of $\mathcal{B}$ that  satisfies the $*$-Property is $2r,$ where $\mathcal{B}= \mathcal{A} \oplus \mathcal{A}.$
\end{lemma}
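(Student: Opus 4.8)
The plan is to compute the components of $\mathcal{B} = \mathcal{A} \oplus \mathcal{A}$ explicitly and to exploit the fact that the $*$-Property decouples across the two summands. Writing the $i$-th component of $\mathcal{A}$ as $\textbf{c}_i = (a_i^1, a_i^2, \ldots, a_i^m) \in \mathbb{R}^m$, a direct inspection of the coordinates of the $2m$ vectors in $\mathcal{B}$ (regarded as elements of $\ell_\infty^{2n}$) shows that the components of $\mathcal{B}$, now vectors in $\mathbb{R}^{2m}$, split into two families: for $j \in \{1, \ldots, n\}$ the $j$-th component is $(\textbf{c}_j, \mathbf{0})$, the vector $\textbf{c}_j$ followed by the zero vector of $\mathbb{R}^m$, while for $i \in \{1, \ldots, n\}$ the $(n+i)$-th component is $(\mathbf{0}, \textbf{c}_i)$. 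The linear independence of $\mathcal{B}$ is immediate from that of $\mathcal{A}$, so the $*$-Property is well defined for $\mathcal{B}$.

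For any $\widetilde{\beta} = (\beta_1, \ldots, \beta_{2m}) \in \mathbb{R}^{2m}$ I would split $\widetilde{\beta} = (\gamma, \delta)$ with $\gamma, \delta \in \mathbb{R}^m$. The associated linear functional then takes the value $\gamma \cdot \textbf{c}_j$ on the component $(\textbf{c}_j, \mathbf{0})$ and the value $\delta \cdot \textbf{c}_i$ on $(\mathbf{0}, \textbf{c}_i)$, so the two blocks do not interfere. I also record that two first-block components $(\textbf{c}_j, \mathbf{0})$ and $(\textbf{c}_{j'}, \mathbf{0})$ are equivalent precisely when $\textbf{c}_j = \pm \textbf{c}_{j'}$, and that a nonzero first-block component is never equivalent to a second-block component; since any component satisfying the $*$-Property is necessarily nonzero, the equivalence classes relevant to us never mix the two blocks.

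First I would prove the forward inclusion. If $\textbf{c}_j$ satisfies the $*$-Property in $\mathcal{A}$ via some $\gamma \in \mathbb{R}^m$, then I claim $(\textbf{c}_j, \mathbf{0})$ satisfies the $*$-Property in $\mathcal{B}$ with the choice $\widetilde{\beta} = (\gamma, \mathbf{0})$: on first-block components the functional reproduces $\gamma \cdot \textbf{c}_l$, which is strictly dominated in modulus by $|\gamma \cdot \textbf{c}_j|$ exactly for the $\textbf{c}_l \neq \pm \textbf{c}_j$, while on every second-block component it vanishes and is therefore also strictly dominated. By the symmetric choice $\widetilde{\beta} = (\mathbf{0}, \delta)$ the same holds for the second-block copies. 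This produces $r$ nonequivalent $*$-components in the first block and $r$ in the second block.

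The converse together with the counting forms the crux. If $(\textbf{c}_j, \mathbf{0})$ satisfies the $*$-Property in $\mathcal{B}$ via $\widetilde{\beta} = (\gamma, \delta)$, then restricting the defining inequality to the first-block indices yields $|\gamma \cdot \textbf{c}_j| > |\gamma \cdot \textbf{c}_l|$ for every $\textbf{c}_l \neq \pm \textbf{c}_j$, which is exactly the $*$-Property for $\textbf{c}_j$ in $\mathcal{A}$ with constant $\gamma$; the analogous statement holds in the second block. Hence a first-block (resp. second-block) component satisfies the $*$-Property in $\mathcal{B}$ if and only if the underlying component satisfies it in $\mathcal{A}$, giving a bijection between the $*$-classes of each block and those of $\mathcal{A}$. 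Since the two blocks contribute disjoint, nonzero, non-mixing equivalence classes, the total number of nonequivalent $*$-components of $\mathcal{B}$ is $r + r = 2r$. The step demanding the most care is the bookkeeping of equivalence classes, namely verifying that the $r$ classes from each block are genuinely distinct and that none is double-counted or accidentally identified across the two blocks, rather than any analytic difficulty, since the decoupling of $\gamma$ and $\delta$ renders each inequality elementary.
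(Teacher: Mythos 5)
Your proposal is correct and its forward direction is exactly the paper's argument: pad the $*$-constant of a component of $\mathcal{A}$ with a block of zeros, namely $(\gamma,\mathbf{0})$ or $(\mathbf{0},\gamma)$, to witness the $*$-Property for the corresponding first-block or second-block component of $\mathcal{B}$. You go further than the paper, however: the paper stops after this lower bound and asserts the count $2r$, whereas you also prove the converse (restricting a witnessing $(\gamma,\delta)$ to one block shows that any $*$-component of $\mathcal{B}$ must come from a $*$-component of $\mathcal{A}$) and check that the equivalence classes from the two blocks neither merge nor get double-counted. These are precisely the steps needed to turn ``at least $2r$'' into ``exactly $2r$,'' so your write-up is the more complete of the two; both converse and bookkeeping are elementary, which is presumably why the paper leaves them implicit, but they are not literally contained in its proof.
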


\begin{proof}
	  Suppose that $\widetilde{a_k}= (a_1^k, a_2^k, \ldots, a_n^k),$ for each $k, 1 \leq k \leq m.$ Clearly, $\mathcal{B}$ is a basis of $\mathbb{Y}.$  Suppose that the $i$-th component of $\mathcal{A}$ satisfies the $*$-Property, then there exists $(\beta_1, \beta_2, \ldots, \beta_m) \in \mathbb{R}^m$ satisfying 
     	\[
     | \sum_{k=1}^{m} \beta_k a_i^k | > \max \bigg\{	| \sum_{k=1}^{m} \beta_k a_j^k |: j \in \{1, 2, \ldots, n\} \setminus E_{i}\bigg\}.
     \]
     Observe that the $i$-th component and the $(m+i)$-th component of $\mathcal{B}$ is $( a_i^1, a_i^2, \ldots, a_i^m,$ $ 0, 0, \ldots, 0) \in \mathbb{R}^m$ and $( 0, 0, \ldots, 0, a_i^1, a_i^2, \ldots, a_i^m) \in \mathbb{R}^m,$ respectively. Then by choosing the scalars $(\beta_1, \beta_2, \ldots, \beta_m, 0, 0, \ldots, 0) \in \mathbb{R}^m$ and $ ( 0, 0, \ldots, 0, \beta_1, \beta_2, \ldots, \beta_m) $ $ \in \mathbb{R}^m,$ it is easy to verify that the $i$-th component and the $(m+i)$-th component satisfy the $*$-Property, respectively. Therefore, if the number of nonequivalent components of $\mathcal{A}$ satisfying the $*$-Property is $r,$ then the number of nonequivalent components of $\mathcal{B}$ satisfying the $*$-Property is $2r.$ Hence the lemma.
 \end{proof}

	Proceeding similarly  the above result can also be proved for the case $$\mathbb{Y}= \underbrace{\mathbb{W} \oplus_{\infty} \mathbb{W}\oplus_{\infty} \ldots \oplus_{\infty} \mathbb{W}}_{n-times}.$$

In the following theorem we find the number of extreme contractions in $\mathbb{L}(\mathbb{X}, \ell_{\infty}^n),$ where $S_\mathbb{X}$ is a $2$-dimensional regular polygon, which generalizes \cite[Th. 2.7]{RRBS}.

\begin{theorem}\label{regular}
	Let $\mathbb{X}$ be a $2$-dimensional  polyhedral Banach space such that the unit sphere of $\mathbb{X}$ is a regular $2r$-sided polygon. Then
the number of facets of the unit ball of  $\mathbb{L}(\mathbb{X}, \ell_\infty^n)$ is $2rn$ and 	$|Ext(B_{\mathbb{L}(\mathbb{X}, \ell_\infty^n)})|= (2r)^n.$
	
\end{theorem}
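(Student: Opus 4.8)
The plan is to combine Theorem~\ref{regular}'s two assertions by first reducing the operator space $\mathbb{L}(\mathbb{X}, \ell_\infty^n)$ to a concrete subspace of a finite-dimensional $\ell_\infty$-space via Theorem~\ref{directsum} and Remark~\ref{remark}, and then counting facets and extreme points there. Since the unit sphere of $\mathbb{X}$ is a regular $2r$-sided polygon, $\mathbb{X}$ is polyhedral with $|Ext(B_{\mathbb{X}})| = 2r$; write $Ext(B_{\mathbb{X}})= \{\pm v_1, \pm v_2, \ldots, \pm v_r\}$. By Remark~\ref{remark}, $\mathbb{L}(\mathbb{X}, \ell_\infty^n)$ is isometrically isomorphic to $\mathbb{Y} = \underbrace{\mathbb{W} \oplus_{\infty} \mathbb{W} \oplus_{\infty} \cdots \oplus_{\infty} \mathbb{W}}_{n\text{-times}}$, where $\mathbb{W}$ is the span of the $m=2$ vectors $\widetilde{w_k}= (v_k^1, v_k^2, \ldots, v_k^r) \in \ell_\infty^r$ built from the coordinates of the extreme points. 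So the entire computation reduces to understanding $\mathbb{W}$ and then taking the $n$-fold $\ell_\infty$-sum.

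First I would analyze $\mathbb{W}$. The key claim is that all $r$ of its components satisfy the $*$-Property, so by Theorem~\ref{star}(i) the number of facets of $B_{\mathbb{W}}$ is $2r$ and, since $\dim \mathbb{W}=2$, these facets are edges and $|Ext(B_{\mathbb{W})}| = 2r$. The reason every component satisfies the $*$-Property is precisely the regularity of the polygon: because $\mathbb{X}$ is isometric to the plane with a regular $2r$-gon as unit sphere, the extreme points $\pm v_1, \ldots, \pm v_r$ are in ``generic position'' in the sense that no one of them is a convex combination of the others, and this is exactly what the $*$-Property of the corresponding component in $\mathbb{W}$ encodes. Concretely, for each $i$ one can choose the scalar vector $\widetilde\beta$ to be (a scaling of) the supporting functional of $B_{\mathbb{X}}$ at the midpoint of the edge dual to $v_i$, yielding $|\sum_k \beta_k v_i^k| = 1 > |\sum_k \beta_k v_j^k|$ for $j \neq i$; translating through the isometry of Remark~\ref{remark} shows the $i$-th component of $\mathbb{W}$ satisfies the $*$-Property. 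I would verify that the $\mathbb{W}$-components are pairwise nonequivalent, which again follows from the distinctness of the extreme directions of a regular polygon.

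Next I would pass from $\mathbb{W}$ to $\mathbb{Y}$. By the extension of Lemma~\ref{lemma} to $n$-fold sums noted immediately after its proof, since $\mathbb{W}$ has $r$ nonequivalent components with the $*$-Property, the space $\mathbb{Y}= \mathbb{W} \oplus_\infty \cdots \oplus_\infty \mathbb{W}$ (as a subspace of $\ell_\infty^{rn}$ with basis $\mathcal{B}= \mathcal{A} \oplus \cdots \oplus \mathcal{A}$) has $rn$ nonequivalent components satisfying the $*$-Property. Applying Theorem~\ref{star}(i) to $\mathbb{Y} \subset \ell_\infty^{rn}$ then gives that the number of facets of $B_{\mathbb{Y}}$ is $2rn$, and hence the number of facets of $B_{\mathbb{L}(\mathbb{X}, \ell_\infty^n)}$ equals $2rn$. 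For the count of extreme contractions I would invoke Proposition~\ref{direct} together with the last formula of Remark~\ref{remark}: $|Ext(B_{\mathbb{L}(\mathbb{X}, \ell_\infty^n)})| = |Ext(B_{\mathbb{W}})|^n = (2r)^n$, using $|Ext(B_{\mathbb{W}})| = 2r$ established above.

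The main obstacle I anticipate is the first step, namely rigorously establishing that \emph{every} component of $\mathbb{W}$ satisfies the $*$-Property and that these components are pairwise nonequivalent. This is where the regularity hypothesis on the polygon is genuinely used, and it requires translating the geometric fact ``each vertex of a regular $2r$-gon is exposed by a distinct supporting functional'' into the algebraic $*$-Property statement about the coordinate vectors $\widetilde w_k$. Once that is in place, everything else is a bookkeeping application of the already-proven Theorems~\ref{directsum} and~\ref{star}, Lemma~\ref{lemma}, Proposition~\ref{direct} and Remark~\ref{remark}, so the remaining steps should be routine.
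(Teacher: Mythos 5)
Your proposal is correct and follows essentially the same route as the paper: reduce $\mathbb{L}(\mathbb{X},\ell_\infty^n)$ via Remark \ref{remark} to the $n$-fold $\ell_\infty$-sum of a $2$-dimensional subspace $\mathbb{W}$ of $\ell_\infty^r$, show every component of $\mathbb{W}$ satisfies the $*$-Property (the paper does this concretely by writing the extreme points as $\pm(\cos\frac{(i-1)\pi}{r},\sin\frac{(i-1)\pi}{r})$ and taking $\widetilde{\beta}$ to be that same unit vector, which is exactly your ``exposing functional'' argument in coordinates), and then apply Lemma \ref{lemma}, Theorem \ref{star} and Proposition \ref{direct} to get $2rn$ facets and $(2r)^n$ extreme points.
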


\begin{proof}
	Without loss of generality we assume that 
	$Ext(B_{\mathbb{X}})= \bigg\{ \pm (1,0), \pm \bigg(\cos\frac{\pi}{r}, \sin \frac{\pi}{r}\bigg), $\\ $  \pm \bigg(\cos\frac{2\pi}{r}, \sin \frac{2\pi}{r} \bigg), \ldots, \pm \bigg(\cos\frac{(r-1)\pi}{r}, \sin \frac{(r-1)\pi}{r} \bigg) \bigg\}.$
	Suppose that 
	\[
       \mathcal{A}= 	\bigg\{ \bigg( 1,\cos\frac{\pi}{r}, \cos\frac{2\pi}{r}, \ldots, \cos\frac{(r-1)\pi}{r} \bigg), \bigg( 0,  \sin \frac{\pi}{r}, \sin \frac{2\pi}{r}, \ldots, \sin \frac{(r-1)\pi}{r}\bigg)   \bigg\} \subset \ell_{\infty}^r.
	\]
	Let $\mathbb{W}= span ~ \mathcal{A}.$ Let $\mathbb{V}=  \underbrace{\mathbb{W} \oplus_{\infty} \mathbb{W}\oplus_{\infty} \ldots \oplus_{\infty} \mathbb{W}}_{n-times}$ and let $\mathcal{B}= \underbrace{\mathcal{A} \oplus \mathcal{A} \oplus \ldots \oplus \mathcal{A}}_{n-times}.$ Clearly, $\mathcal{B}$ is  a basis of $\mathbb{V}.$
	Then  $\mathbb{L}(\mathbb{X}, \ell_\infty^n)$ (see Remark \ref{remark}) is isometrically isomorphic to $ \mathbb{V},$ which is a $2n$-dimensional subspace of $\ell_\infty^{rn}.$ For each $i, 1 \leq i \leq r,$ the $i$-th component of $\mathcal{A}$ is $\bigg( \cos\frac{(i-1)\pi}{r}, \sin \frac{(i-1) \pi}{r}\bigg)$ and each of them   satisfies the $*$-Property.
From Lemma \ref{lemma}, each component of $ \mathcal{B} $ satisfies the $*$-Property. Therefore, from Theorem \ref{star}, the number of facets of  $ B_{\mathbb{V}} $ is $2rn.$ Since  $\mathbb{L}(\mathbb{X}, \ell_\infty^n)$  is isometrically isomorphic to $ \mathbb{V},$ so the number of facets of the unit ball of  $\mathbb{L}(\mathbb{X}, \ell_{\infty}^n)$ is $2rn.$ 
	
	Since	$ dim(\mathbb{W})=2,$  $ |Ext(B_{\mathbb{W}})|= \textit{ number of facets of $B_{\mathbb{W}}$} =  2r.$ From Proposition \ref{direct}, $|Ext(B_{\mathbb{L}(\mathbb{X}, \ell_{\infty}^n)})|=|Ext(B_\mathbb{V})|= |Ext(B_{\mathbb{W}}) |^n= (2r)^n.$ Hence the theorem.\\
\end{proof}

In \cite[Th. 2.7]{RRBS}, it is proved that if the unit sphere of  $\mathbb{X}$ is a regular hexagon and $\mathbb{Y}= \ell_{\infty}^2,$ then the total number of extreme contractions in $\mathbb{L}(\mathbb{X}, \mathbb{Y})$ is $36.$ Putting $n=2$ and $r=3$ in Theorem \ref{regular}(ii) we get the same result. \\

We end this article with following example in which we compute the number of extreme contractions in $\mathbb{L}(\mathbb{X}, \ell_{\infty}^n)$ with the help of our newly introduced maximal $*$-constant.


\begin{example}\label{last-ex}
	Let $\mathbb{X}$ be a $3$-dimensional polyhedral Banach space with  $Ext(B_{\mathbb{X}})= \{ \pm (1, 0, 0), \pm (0, 1, 0), \pm (1, 1, 0), \pm (0, 0, 1)\}.$ 	Let $ \mathcal{A}= \{ (1,0,1,0), (0, 1, 1, 0),  (0, 0, 0, 1)\} $ and let  $ \mathbb{W}= span ~ \mathcal{A}.$ Let $ \mathbb{Y} = \underbrace{\mathbb{W} \oplus_{\infty} \mathbb{W}\oplus_{\infty} \ldots \oplus_{\infty} \mathbb{W}}_{n-times} $ and let $\mathcal{B} = \underbrace{\mathcal{A} \oplus \mathcal{A} \oplus \ldots \oplus \mathcal{A}}_{n-times}.$ 	We observe that $\mathbb{L}(\mathbb{X}, \ell_\infty^n)$ is isometrically isomorphic to $\mathbb{Y}.$   Now  each component of $\mathcal{A}$ satisfies the $*$-Property and  from Lemma \ref{lemma} it follows that  each component of $\mathcal{B}$  satisfies the $*$-Property.   Since the number of components of $\mathcal{B}$ is $4n,$ then from Theorem \ref{star}, the number of facets of $B_{\mathbb{Y}}=8n.$
		It is straightforward to verify that $ \pm (1, -1, 1), \pm (1, -1, -1), \pm(0, 1, 1), \pm (0, 1, -1), $ $ \pm (1, 0, 1), \pm (1, 0, -1)$ are the maximal  $*$-constant of $\mathbb{W}.$ Therefore, 
	$Ext(B_{\mathbb{W}})= \{ \pm (1, -1, 0, 1), \pm (1, -1, 0, -1),$ $ \pm (0, 1, 1, 1), \pm ( 0, 1, 1, -1), \pm (1, 0, 1, 1), \pm (1, 0, 1, -1)\}.$
	So, $| Ext(B_\mathbb{Y})|= (12)^n.$ Therefore, $|Ext(\mathbb{L}(\mathbb{X}, \ell_{\infty}^n))|=(12)^n.$
	\end{example}

Using the maximal $*$-constant and following  similar technique as in  Example \ref{last-ex}, we can easily determine the number of extreme contraction of $\mathbb{L}(\mathbb{X}, \ell_{\infty}^n),$ whenever the extreme points of $B_{\mathbb{X}}$ are  given.

\end{document}